\documentclass[reqno]{amsart}
\usepackage{euscript}



\theoremstyle{plain}
\numberwithin{equation}{section}
\newtheorem{theorem}{Theorem}
\newtheorem{proposition}{Proposition}
\newtheorem{lemma}{Lemma}
\newtheorem{corollary}{Corollary}
\theoremstyle{remark}
\newtheorem{remark}{Remark}
\newtheorem{example}{Example}
\renewcommand{\epsilon}{\varepsilon}
\renewcommand{\phi}{\varphi}

\DeclareMathOperator{\Ker}{Ker}
\DeclareMathOperator{\esssup}{\mathrm{ess\,sup}}
\def\N{\mathbb{N}}

\def\R{\mathbb{R}}

\def\Id{\text{\rm Id}}


\begin{document}

\title{Admissibility and general dichotomies for evolution families}

\author{Davor Dragi\v cevi\'c}
\address{Department of Mathematics, University of Rijeka, Radmile Matej\v{c}i\'{c} 2, 51000 Rijeka, Croatia}
\email{ddragicevic@math.uniri.hr}

\author{Nevena Jur\v cevi\' c Pe\v cek}
\address{Department of Mathematics, University of Rijeka, Radmile Matej\v{c}i\'{c} 2, 51000 Rijeka, Croatia}
\email{njurcevic@math.uniri.hr}

\author{Nicolae Lupa}
\address{Department of Mathematics, Politehnica University of Timi\c{s}oara,
Pia\c ta Victoriei 2, 300006 Timi\c soara, Romania}
\email{nicolae.lupa@upt.ro}

\begin{abstract}
 For an arbitrary noninvertible evolution family on the half-line and for  $\rho \colon [0, \infty)\to [0, \infty)$ in a large class of rate functions, we consider the notion of a $\rho$-dichotomy with respect to a family of norms and characterize it in terms of two admissibility conditions. In particular, our results are applicable to  exponential as well as polynomial dichotomies with respect to a family of norms. As a nontrivial application of our work, we establish the robustness of general nonuniform dichotomies.
\end{abstract}

\maketitle
\section{Introduction}
Among many methods used to study the asymptotic  behavior of nonautonomous systems, one of the most famous is the so-called admissibility method. This line of research in the context of differential equations has a long history that goes back to the pioneering work of Perron~\cite{Pe}.
The main  idea of Perron's work was  to characterize
the asymptotic properties of the linear differential equation
\[
{\dot x}(t) = A(t)x(t), \quad t \in {\mathbb J},
\]
in terms of the (unique) solvability in $O({\mathbb J}, X)$  of the equation
\[
{\dot x}(t) = A(t)x(t)  + f(t), \quad t \in {\mathbb J},
\]
for each test function  $f\in I({\mathbb J}, X)$, where ${\mathbb J}\in \{[0, \infty), \R\}$. Here  $X$ is a Banach space,  while $I({\mathbb J}, X)$ - the input-space and $O({\mathbb J}, X)$ - the output space  are suitably constructed function spaces.
The milestones of this theory were grounded in the sixtieth in the remarkable works of Massera and  Sch\"affer~\cite{MaS-MA-1960-I,MaS-MA-1960-IV,MS} and respectively in the seventies in the outstanding monographs of Coppel \cite{Co-78} and Daleck\u \i i and Kre\u \i n \cite{DaK-1974}.

Since then various authors obtained valuable contributions to this line of the research. For the results dealing with characterizations of \emph{uniform} exponential behavior in terms of appropriate admissibility properties, we refer to the works of Huy~\cite{Huy}, Latushkin, Randolph and Schnaubelt~\cite{LRS}, Van Minh,  R\"{a}biger and Schnaubelt~\cite{Mi.Ra.Sch}, Van Minh and Huy~\cite{MH},  Preda, Pogan and Preda~\cite{Preda, Preda2} as well as  Sasu and  Sasu~\cite{Sasu1, Sasu3, Sasu2, Sasu4, Sasu5}. For contributions dealing with various concepts of nonuniform exponential behavior, we refer to~\cite{Ba.Dr.Va.1, Ba.Dr.Va.2, Lu.Po, Me.Sa.Sa, PM, Sa.Sa.1,Zh.Lu.Zhang} and references therein. For a detailed description of this line of the research, we refer to~\cite{book}.

We point out  that all the above works deal with \emph{exponential} behavior. Although this type of behavior has a somewhat privileged role due to its connections with the hyperbolic smooth dynamics, it is certainly not the only type of behavior that appears in the qualitative study of nonautonomous differential equations.
To the best of our knowledge, the study of dichotomies with not necessarily exponential rates of expansion and contraction was initiated by Muldowney~\cite{Mul} and Naulin and Pinto~\cite{NP}. More recently, in the context of nonuniform asymptotic behavior such dichotomies have been studied by Barreira and Valls  \cite{Ba.Va.1,Ba.Va.3}
and Bento and Silva \cite{BS1,BS2}. A special emphasis was devoted to the so-called polynomial dichotomies~\cite{Ba.Va.4,Be.Si}.
A complete characterization of polynomial dichotomies in terms of  admissibility for evolution families was obtained by Dragi\v cevi\' c~\cite{Da.1} (see also~\cite{d} for  related results in the case of discrete time) by building on the work of Hai~\cite{H}, who considered polynomial stability.

The main objective of the present paper is to obtain similar results to that in~\cite{Da.1} but for a much wider class of dichotomies. More precisely, for a large class of rate functions $\rho \colon [0, \infty) \to [0, \infty)$, we introduce the notion of a $\rho$-dichotomy with respect to a family of norms. 
We then obtain a complete characterization of this concept in terms of  appropriate admissibility conditions. We point out that our results are  new even in the particular case of \emph{uniform}
$\rho$-dichotomies. Indeed, although the proofs use the somewhat standard techniques, the major task accomplished in the present paper was to formulate appropriate  admissibility conditions for the general dichotomies we study. In addition, the obtained results are new even for the class of polynomial dichotomies since
in comparison to~\cite{Da.1}, we don't require that our evolution family exhibits polynomial bounded growth property. Consequently, we need to impose two admissibility conditions (rather than just one as in~\cite{Da.1}) to characterize polynomial dichotomies. We stress that in the present paper we also use different
admissibility spaces from those in~\cite{Da.1}.

The paper is organized as follows. In Section~\ref{P} we introduce the class of dichotomies we study as well as input and output spaces we are going to use. 
In Section~\ref{DA}, we show that the existence of $\rho$-dichotomies yields two types of admissibility properties. Then, in Section~\ref{AD} we obtain a converse results by showing that those admissibility properties imply the existence of a $\rho$-dichotomy. Finally, in Section~\ref{R} we apply those results to establish the robustness of $\rho$-dichotomies.

\section{Preliminaries}\label{P}

\subsection{Generalized dichotomies}

Let $X=(X, \lVert \cdot \rVert)$ be an arbitrary Banach space  and  let $\mathcal{B}(X)$ be the Banach algebra of all bounded linear operators on $X$.
A family $\mathcal{T}=\{T(t,s)\}_{t\ge s \ge 0}$ of operators  in $\mathcal{B}(X)$ is said to be an \emph{evolution family} on $X$ if the following properties hold:
\begin{itemize}
\item $T(t,t)=\Id$, for $t\ge 0$;
\item $T(t,s)T(s, \tau)=T(t, \tau)$, for $t\ge s\ge \tau\ge 0$;
\item for all $s\geq 0$ and  $x\in X$ the mapping $t\mapsto T(t,s)x$ is continuous on $[s, \infty)$ and the mapping $t\mapsto T(s,t)x$ is continuous on $[0,s]$.
\end{itemize}

In this paper we always assume that  $\mathcal{T}=\{T(t,s)\}_{t\ge s \ge 0}$ is an evolution family on $X$ and
let  $\rho \colon [0, \infty )\to [0, \infty)$ be a strictly increasing function of class $C^1$ such that $$\rho(0)=0 \text{ and }\lim\limits_{t\to \infty} \rho(t)=\infty.$$  In particular, observe that $\rho$ is bijective.
Furthermore, assume that $\{\lVert \cdot \rVert_t\}_{t\ge 0}$ is a family of norms on $X$ such that:
\begin{itemize}
\item there exist $C>0$ and $\epsilon \ge 0$ with
\begin{equation}\label{427}
\lVert x\rVert \le \lVert x\rVert_t \le Ce^{\epsilon\rho(t)} \lVert x\rVert, \quad \text{for $x\in X$ and $t\ge 0$;}
\end{equation}
\item the mapping $t\mapsto \lVert x\rVert_t$ is continuous for each $x\in X$.
\end{itemize}

We say that the evolution family $\mathcal{T}$ admits a \emph{$\rho$-dichotomy} with respect to the family of norms $\lVert \cdot \rVert_t$, $t\ge 0$, if there exists
a family $\{P(t)\}_{t\geq 0}$ of projections  on $X$ satisfying
\begin{equation}\label{pro}
T(t,s)P(s)=P(t)T(t,s), \text{ for } t\ge s\ge 0,
\end{equation}
such that 
\begin{equation}\label{pro.inv}
T(t,s)\rvert_{\Ker P(s)} \colon \Ker P(s) \to \Ker P(t) \text{ is invertible for all $t\ge s\ge 0$},
\end{equation}
and there exist constants
$\lambda,D >0$  such that:
\begin{itemize}
\item for $x\in X$ and $t\ge s\ge 0$,
\begin{equation}\label{d1}
\lVert T(t,s)P(s)x\rVert_t \le De^{-\lambda (\rho(t)-\rho(s))}\lVert x\rVert_s;
\end{equation}
\item for $x\in X$ and $0\le t\le s$,
\begin{equation}\label{d2}
\lVert T(t,s)(\Id-P(s))x\rVert_t \le De^{-\lambda (\rho(s)-\rho(t))}\lVert x\rVert_s,
\end{equation}
where
\[
T(t,s):=\bigg{(} T(s,t )\rvert_{\Ker P(t)} \bigg{)}^{-1} \colon \Ker P(s) \to \Ker P(t),
\]
for $0\le t\le s$.
\end{itemize}

In the following we recall the concept of \emph{$\rho$-nonuniform exponential dichotomy} for evolution families (see \cite{Ba.Va.1,Ba.Va.3}) and establish its connection with the notion of $\rho$-dichotomy with respect to a family of norms.
An evolution family $\mathcal{T}$
is said to admit a \emph{$\rho$-nonuniform exponential dichotomy} if there exists a family $\{P(t)\}_{t\geq 0}$ of projections  on $X$ satisfying \eqref{pro} and \eqref{pro.inv},
and there exist constants $\lambda,D>0$ and $\epsilon\geq 0$ such that
\begin{equation}\label{dich.s}
\| T(t,s) P(s)\|\leq De^{-\lambda(\rho(t)-\rho(s))+\epsilon\rho(s)}, \text{ for } t\geq s\geq 0,
\end{equation}
and
\begin{equation}\label{dich.u}
\| T(t,s)(\Id- P(s))\|\leq De^{-\lambda(\rho(s)-\rho(t))+\epsilon\rho(s)}, \text{ for } 0\leq t\leq s.
\end{equation}

The concept of $\rho$-nonuniform exponential dichotomy includes as a special case the usual  \emph{exponential behavior} when $\rho(t)=t$. Also, for $\rho(t)=\ln(t+1)$ we obtain the concept of \emph{nonuniform polynomial dichotomy} introduced independently by Barreira and Valls \cite{Ba.Va.4} and Bento and Silva  \cite{Be.Si}, and more general for $\rho(t)=\int_{0}^t \mu(t) dt,$ where $\mu:[0,\infty)\to (0,\infty)$ is a continuous function such that $\lim\limits_{t\to \infty} \int_{0}^t \mu(t) dt=\infty,$ we obtain the nonuniform version of the \emph{generalized dichotomy} in the sense of  Muldowney \cite{Mul}.

\begin{proposition}\label{prop.equiv}
The following statements are equivalent:
\begin{enumerate}
\item $\mathcal{T}$ admits a $\rho$-nonuniform exponential dichotomy;
\item $\mathcal{T}$ admits a $\rho$-dichotomy with respect to a family of norms $\|\cdot\|_t$, $t\geq 0$ such that $t\mapsto \lVert x\rVert_t$ is continuous for each $x\in X$.
\end{enumerate}
\end{proposition}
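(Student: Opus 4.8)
The implication $(2)\Rightarrow(1)$ is straightforward, so I would dispatch it first. Assume $\mathcal T$ admits a $\rho$-dichotomy with respect to a family of norms $\|\cdot\|_t$, with projections $P(t)$. Then \eqref{pro} and \eqref{pro.inv} already give the structural part of (1), and for $x\in X$ and $t\ge s\ge0$ the inequalities in \eqref{427} and \eqref{d1} yield
\[
\|T(t,s)P(s)x\|\le\|T(t,s)P(s)x\|_t\le De^{-\lambda(\rho(t)-\rho(s))}\|x\|_s\le CDe^{-\lambda(\rho(t)-\rho(s))+\epsilon\rho(s)}\|x\|,
\]
and taking the supremum over $\|x\|\le1$ gives \eqref{dich.s} with constant $CD$. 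The estimate \eqref{dich.u} follows from \eqref{d2} in exactly the same way, so $\mathcal T$ has a $\rho$-nonuniform exponential dichotomy (with the same $\lambda$ and $\epsilon$ and the same projections).

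For $(1)\Rightarrow(2)$ I would build Lyapunov-type norms adapted to $\rho$. Fix $a\in(0,\lambda)$ and, for $x\in X$ and $t\ge0$, set
\[
\|x\|_t:=\sup_{\tau\ge t}\|T(\tau,t)P(t)x\|\,e^{a(\rho(\tau)-\rho(t))}+\sup_{0\le\tau\le t}\|T(\tau,t)(\Id-P(t))x\|\,e^{a(\rho(t)-\rho(\tau))},
\]
where, as in the definition of a $\rho$-dichotomy, $T(\tau,t)=(T(t,\tau)|_{\Ker P(\tau)})^{-1}$ when $\tau\le t$. By \eqref{dich.s} and \eqref{dich.u} the term inside the first (resp. second) supremum is at most $De^{\epsilon\rho(t)}\|x\|\,e^{-(\lambda-a)(\rho(\tau)-\rho(t))}$ (resp. the analogue); since $\lambda-a>0$ this is $\le De^{\epsilon\rho(t)}\|x\|$, so both suprema are finite and $\|x\|_t\le 2De^{\epsilon\rho(t)}\|x\|$. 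Putting $\tau=t$ in the two suprema gives $\|P(t)x\|$ and $\|(\Id-P(t))x\|$, so the triangle inequality yields $\|x\|_t\ge\|x\|$; thus \eqref{427} holds with $C=2D$. That $\|\cdot\|_t$ is a norm is routine: each summand is a seminorm and their sum vanishes only if $P(t)x=(\Id-P(t))x=0$.

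For the dichotomy estimates, let $t\ge s$; since $(\Id-P(t))T(t,s)P(s)x=0$ by \eqref{pro}, only the first summand of $\|T(t,s)P(s)x\|_t$ survives, and using the cocycle identity $T(\tau,t)T(t,s)=T(\tau,s)$ it equals
\[
e^{-a(\rho(t)-\rho(s))}\sup_{\tau\ge t}\|T(\tau,s)P(s)x\|e^{a(\rho(\tau)-\rho(s))}\le e^{-a(\rho(t)-\rho(s))}\|x\|_s,
\]
which is \eqref{d1} with $D=1$ and $\lambda$ replaced by $a$. Symmetrically, for $0\le t\le s$ one has $T(t,s)(\Id-P(s))x\in\Ker P(t)$, so only the second summand of $\|T(t,s)(\Id-P(s))x\|_t$ survives, and the cocycle identity for the inverses, $T(\tau,t)T(t,s)=T(\tau,s)$ for $\tau\le t\le s$, gives \eqref{d2}, again with $D=1$ and rate $a$. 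Hence $\mathcal T$ admits a $\rho$-dichotomy with respect to $\|\cdot\|_t$ with the same projections, provided we also verify continuity.

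The remaining point — that $t\mapsto\|x\|_t$ is continuous for each fixed $x$ — is the only genuinely technical step, and I expect it to be the main obstacle. The plan is a truncation argument. The second summand is a supremum over the interval $[0,t]$, which depends continuously (and compactly) on $t$, so its continuity follows from joint continuity of $(\tau,t)\mapsto\|T(\tau,t)(\Id-P(t))x\|e^{a(\rho(t)-\rho(\tau))}$ together with the elementary fact that $t\mapsto\max_{\tau\in[0,t]}\phi(\tau,t)$ is continuous when $\phi$ is. For the first summand, the net decay $e^{-(\lambda-a)(\rho(\tau)-\rho(t))}$ forces the tail $\sup_{\tau\ge M}(\cdots)$ to $0$ as $M\to\infty$, uniformly for $t$ in a fixed compact interval; hence on such an interval the first summand is a uniform limit of its finite-horizon truncations $\sup_{t\le\tau\le M}(\cdots)$, and it suffices to prove each truncation is continuous in $t$, which again reduces to joint continuity of $(\tau,t)\mapsto\|T(\tau,t)P(t)x\|e^{a(\rho(\tau)-\rho(t))}$. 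In both cases the required joint continuity follows from $\rho\in C^1$, the (strong) continuity and local operator-norm boundedness of the evolution family, and the strong continuity of $t\mapsto P(t)$. The last ingredient I would isolate as a lemma: under \eqref{pro}, \eqref{pro.inv} and the bounds \eqref{dich.s}--\eqref{dich.u} the projections depend strongly continuously on $t$ — right-continuity is immediate from \eqref{pro} and \eqref{dich.s} (write $P(t_n)x=P(t_n)T(t_n,t_0)x-P(t_n)(T(t_n,t_0)x-x)$ and use $\|P(t_n)\|\le De^{\epsilon\rho(t_n)}$ together with $T(t_n,t_0)x\to x$), while left-continuity is more delicate and uses in addition the invertibility \eqref{pro.inv}, exploiting that by \eqref{dich.u} the norm of $(T(t_0,t_n)|_{\Ker P(t_n)})^{-1}$ stays bounded as $t_n\uparrow t_0$ since $\rho(t_0)-\rho(t_n)\to0$. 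Once this lemma is available the truncation argument closes, \eqref{427} holds together with the continuity requirement, and the proof is complete.
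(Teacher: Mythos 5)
Your construction is essentially the paper's: the easy direction $(2)\Rightarrow(1)$ is identical, and for $(1)\Rightarrow(2)$ the paper uses exactly the same Lyapunov-type norms (with exponent $\lambda$ itself rather than your $a\in(0,\lambda)$; both choices give \eqref{427} with $C=2D$ and the estimates \eqref{d1}--\eqref{d2} by the same cocycle computation). The only real divergence is the continuity of $t\mapsto\|x\|_t$, which the paper dispatches by citing the argument of \cite[Proposition 5.6]{book}, whereas you sketch it directly; your choice $a<\lambda$ is in fact what makes your tail-truncation of the first supremum work, so that part of the plan is sound.

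The one place where your sketch has a genuine gap is the left strong continuity of the projections (and, relatedly, the continuity in $t$ of the backward terms $T(\tau,t)(\Id-P(t))x$). Boundedness of $\bigl(T(t_0,t_n)\rvert_{\Ker P(t_n)}\bigr)^{-1}$ as $t_n\uparrow t_0$ only reduces the problem: writing $Q=\Id-P$, one gets $\|Q(t_n)x-T(t_n,t_0)Q(t_0)x\|\le M\,\|T(t_0,t_n)Q(t_n)x-Q(t_0)x\|\to0$ using \eqref{pro} and forward strong continuity, so everything hinges on showing $T(t_n,t_0)v\to v$ for a \emph{fixed} $v\in\Ker P(t_0)$. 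That does not follow from the bounded inverses together with $T(t_0,t_n)\to\Id$ strongly, because the identity $T(t_n,t_0)v-v=(\Id-T(t_0,t_n))T(t_n,t_0)v$ applies the strongly (not uniformly) convergent operators to the \emph{moving} vectors $T(t_n,t_0)v$, and the resulting estimate does not close. The repair is the observation, which your sketch omits, that \eqref{pro.inv} forces $\Ker P(t)=T(t,0)\Ker P(0)$ for every $t$; hence $T(t,t_0)v=T(t,0)z$ with the fixed vector $z=\bigl(T(t_0,0)\rvert_{\Ker P(0)}\bigr)^{-1}v$, and continuity of $t\mapsto T(t,t_0)v$ (from either side) is then immediate from the evolution-family axioms. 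With this device the strong continuity of $t\mapsto P(t)x$, and likewise the joint continuity of $(\tau,t)\mapsto T(\tau,t)(\Id-P(t))x$ needed for your second supremum, go through, and your truncation argument completes the proof along the same lines the paper delegates to \cite{book}.
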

\begin{proof}
Assume that $\mathcal{T}$ admits a $\rho$-nonuniform exponential dichotomy. For $t\geq 0$ and $x\in X$, set
\[
\|x\|_t=\sup\limits_{\tau \geq t} e^{\lambda (\rho(\tau)-\rho(t))}\|T(\tau,t)P(t)x\|+\sup\limits_{\tau\in[0,t]} e^{\lambda (\rho(t)-\rho(\tau))}\|T(\tau,t)(\Id-P(t))x\|.
\]
A simple computation shows that \eqref{427} holds for $C=2D$. Moreover, by repeating the arguments in the proof of~\cite[Proposition 5.6]{book}, one can show that  $t\mapsto \lVert x\rVert_t$ is continuous for each $x\in X$. Furthermore, for $t\geq s\geq 0$ and $x\in X$ we have
\begin{align*}
\| T(t,s)P(s)x\|_t&=\sup\limits_{\tau \geq t} e^{\lambda (\rho(\tau)-\rho(t))}\|T(\tau,s)P(s)x\|\\
&=\sup\limits_{\tau \geq t}e^{-\lambda(\rho(t)-\rho(s))} e^{\lambda (\rho(\tau)-\rho(s))}\|T(\tau,s)P(s)x\|\\
&\leq  e^{-\lambda(\rho(t)-\rho(s))} \sup\limits_{\tau \geq s} e^{\lambda (\rho(\tau)-\rho(s))}\|T(\tau,s)P(s)x\|\\
&\leq  e^{-\lambda(\rho(t)-\rho(s))} \| x\|_s,
\end{align*}
and thus \eqref{d1} holds. Similarly, one can prove \eqref{d2}. Hence, the evolution family $\mathcal{T}$ admits a
$\rho$-dichotomy with respect to the family of norms $\|\cdot\|_t$, $t\geq 0$, defined above.

Conversely, assume that $\mathcal{T}$  admits a $\rho$-dichotomy with respect to a family of norms $\|\cdot\|_t$ on $X$ satisfying
\eqref{427} for some $C>0$ and $\epsilon\geq 0$. For $t\geq s\geq 0$ and $x\in X$ we have
\begin{align*}
\| T(t,s)P(s)x \| &\leq \| T(t,s)P(s)x \|_t\\
&\leq D e^{-\lambda(\rho(t)-\rho(s))}\|x\|_s\\
&\leq D C e^{\epsilon \rho(s)} e^{-\lambda(\rho(t)-\rho(s))}\|x\|,
\end{align*}
and thus \eqref{dich.s} holds.
Similarly, one can show  \eqref{dich.u}. Therefore, the evolution family $\mathcal{T}$ admits a $\rho$-nonuniform exponential dichotomy.
\end{proof}

\subsection{Admissible spaces}

Let $Y_1$ be the space of all Bochner measurable functions $x\colon [0, \infty)\to X$ such that
\[
\lVert x\rVert_1:=\int_0^\infty \lVert x(t)\rVert_t\, dt<\infty,
\]
identifying functions that are equal Lebesque-almost everywhere.
It is easy to show that  $(Y_1, \lVert \cdot \rVert_1)$ is  a Banach space (see  \cite[Theorem 1]{Ba.Dr.Va.1}). Moreover, consider the space $Y_\infty$ of all continuous functions $x\colon [0, \infty) \to X$ such that
\[
\lVert x\rVert_\infty:=\sup\limits_{t\ge 0}\lVert x(t)\rVert_t <\infty.
\]
One can easily prove that  $(Y_\infty, \lVert \cdot \rVert_\infty)$ is a Banach space. For a closed subspace $Z\subset X$,  $Y_\infty^Z$ is the space of all $x\in Y_\infty$ such that $x(0)\in Z$. Obviously, $Y_\infty^Z$ is a closed subspace of $Y_\infty$, therefore it is also a Banach space.

We consider another Banach function space  $(Y_\infty', \lVert \cdot \rVert_\infty')$, which consist of all Bochner measurable functions $x\colon [0, \infty) \to X$ such that
\[
\lVert x\rVert_\infty':=\esssup\limits_{t\ge 0}\lVert x(t)\rVert_t <\infty,
\]
where $\esssup$ is taken with respect to the Lebesgue measure on $[0, \infty)$.

\section{From dichotomy to admissibility}\label{DA}

In this section we show that the existence of a $\rho$-dichotomy with respect to a family of norms for an evolution family $\mathcal{T}=\{T(t,s)\}_{t\ge s \ge 0}$  yields the admissibility of the pairs
$\left(Y_\infty^Z,Y_1\right)$, $\left(Y_\infty^Z,Y_\infty'\right)$ for a certain closed subspace $Z\subset X$.

\begin{proposition}\label{p1}
Assume that the evolution family $\mathcal{T}$ admits a $\rho$-dichotomy with respect to a family of norms $\lVert \cdot \rVert_t$, $t\ge 0$, and set $Z=\Ker P(0)$. Then, for each $y\in Y_1$ there exists a unique $x\in Y_\infty^Z$ such that
\begin{equation}\label{532}
x(t)=T(t,s)x(s)+\int_s^t T(t, \tau)y(\tau)\, d\tau, \quad \text{for $t\ge s\ge 0$.}
\end{equation}
\end{proposition}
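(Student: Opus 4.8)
The plan is to produce $x$ explicitly via the Green function associated with the dichotomy, check boundedness, continuity and $x(0)\in Z$, verify the integral identity \eqref{532}, and finally prove uniqueness. For $y\in Y_1$ I would set
\[
x(t)=\int_0^t T(t,\tau)P(\tau)y(\tau)\,d\tau-\int_t^\infty T(t,\tau)(\Id-P(\tau))y(\tau)\,d\tau,\qquad t\ge 0,
\]
where in the second integral $T(t,\tau)$ denotes, for $\tau\ge t$, the backward operator $\big(T(\tau,t)\rvert_{\Ker P(t)}\big)^{-1}\colon\Ker P(\tau)\to\Ker P(t)$ provided by \eqref{pro.inv}. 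Bochner measurability of the two integrands is routine, using strong continuity of $\mathcal{T}$ and of $t\mapsto P(t)$ together with local boundedness of operator norms on compact time intervals. Since $\rho$ is increasing, \eqref{d1} gives $\lVert T(t,\tau)P(\tau)y(\tau)\rVert_t\le D\lVert y(\tau)\rVert_\tau$ for $\tau\le t$, while \eqref{d2} gives $\lVert T(t,\tau)(\Id-P(\tau))y(\tau)\rVert_t\le D\lVert y(\tau)\rVert_\tau$ for $\tau\ge t$; hence both integrals converge absolutely in $(X,\lVert\cdot\rVert_t)$ and $\lVert x(t)\rVert_t\le 2D\lVert y\rVert_1$ for every $t$, so $x\in Y_\infty$ with $\lVert x\rVert_\infty\le 2D\lVert y\rVert_1$.

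Next I would verify $x\in Y_\infty^Z$ and \eqref{532}. Each value $T(0,\tau)(\Id-P(\tau))y(\tau)$ lies in the closed subspace $\Ker P(0)$, so $x(0)=-\int_0^\infty T(0,\tau)(\Id-P(\tau))y(\tau)\,d\tau\in\Ker P(0)=Z$. For $t'\ge t\ge 0$ one splits the integrals defining $x(t')$ and $x(t)$ at the points $t$ and $t'$ and uses the cocycle property of $\mathcal{T}$ on $\Ima P$ together with its counterpart on $\Ker P$, namely $T(t',t)T(t,\tau)(\Id-P(\tau))=T(t',\tau)(\Id-P(\tau))$ for $\tau\ge t$, which follows from \eqref{pro}, \eqref{pro.inv} and the definition of the backward operators; after the resulting cancellations one is left precisely with $x(t')-T(t',t)x(t)=\int_t^{t'}T(t',\tau)y(\tau)\,d\tau$, which is \eqref{532}. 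Continuity of $x$ then follows from \eqref{532}: right-continuity at $t$ is immediate since $T(t',t)x(t)\to x(t)$ as $t'\downarrow t$ by strong continuity and $\lVert\int_t^{t'}T(t',\tau)y(\tau)\,d\tau\rVert\to 0$; for left-continuity at $t_0$ one writes, for small $h>0$, $x(t_0-h)=T(t_0-h,t_0-\delta)x(t_0-\delta)+\int_{t_0-\delta}^{t_0-h}T(t_0-h,\tau)y(\tau)\,d\tau$, observes that as $h$ ranges over $[0,\delta]$ the first term ranges in the compact image of the continuous curve $\sigma\mapsto T(\sigma,t_0-\delta)x(t_0-\delta)$ while the norm of the second is controlled by $\int_{t_0-\delta}^{t_0}\lVert y(\tau)\rVert_\tau\,d\tau$ up to a constant, so that $\{x(t_0-h):0\le h\le\delta\}$ is totally bounded up to an error vanishing with $\delta$; combining this with $T(t_0,t_0-h)\to\Id$ strongly (uniformly on compact sets) and uniform boundedness on $[t_0-\delta,t_0]$ yields $x(t_0-h)\to x(t_0)$. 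Hence $x\in Y_\infty^Z$.

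For uniqueness, if $x_1,x_2\in Y_\infty^Z$ both satisfy \eqref{532}, then $z:=x_1-x_2\in Y_\infty^Z$ satisfies the homogeneous relation $z(t)=T(t,s)z(s)$ for all $t\ge s\ge 0$. Taking $s=0$ and applying $P(t)$, \eqref{pro} gives $P(t)z(t)=T(t,0)P(0)z(0)=0$ since $z(0)\in Z=\Ker P(0)$, so $z(t)\in\Ker P(t)$ for every $t$. Consequently, for $0\le s\le t$ we have $z(s)=T(s,t)z(t)$ with $T(s,t)$ the backward operator, and \eqref{d2} gives $\lVert z(s)\rVert_s\le De^{-\lambda(\rho(t)-\rho(s))}\lVert z(t)\rVert_t\le De^{-\lambda(\rho(t)-\rho(s))}\lVert z\rVert_\infty$. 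Letting $t\to\infty$ and using $\rho(t)\to\infty$ forces $\lVert z(s)\rVert_s=0$ for every $s$, whence $x_1=x_2$.

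The main obstacle I anticipate is the verification of \eqref{532}: since $\mathcal{T}$ is noninvertible, the ``unstable'' part of the computation must be carried out through the backward operators on $\Ker P(\cdot)$ supplied by \eqref{pro.inv}, and one must check that the cocycle-type identities and the cancellations between the two integrals all go through correctly. The continuity of the constructed $x$ — particularly left-continuity, where $\mathcal{T}$ is only strongly (not norm) continuous — is the secondary technical point, handled by the uniform-boundedness and relative-compactness argument indicated above; the remaining estimates (absolute convergence of the integrals and the uniqueness step) are straightforward consequences of \eqref{d1} and \eqref{d2}.
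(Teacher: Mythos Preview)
Your proposal is correct and follows essentially the same approach as the paper: the same Green-function formula for $x$, the same use of \eqref{d1}--\eqref{d2} to bound $\lVert x(t)\rVert_t$, the same direct verification of \eqref{532} via the cocycle identities, and the same uniqueness argument based on \eqref{d2} and $\rho(t)\to\infty$. You are more explicit than the paper about continuity of $x$ (which is needed since $Y_\infty$ consists of continuous functions but the paper leaves it implicit), and your bound $2D\lVert y\rVert_1$ can be sharpened to $D\lVert y\rVert_1$ by noting that the two integrals together range over $[0,\infty)$; neither point affects the substance.
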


\begin{proof}
Take an arbitrary $y\in Y_1$. For $t\ge 0$, set
\[
x(t)=\int_0^t T(t,s)P(s)y(s)\, ds-\int_t^\infty T(t,s)(\Id-P(s))y(s)\, ds.
\]
It follows from~\eqref{d1} and~\eqref{d2} 
that
\[
\begin{split}
\lVert x(t)\rVert_t &\le \int_0^t \lVert  T(t,s)P(s)y(s)\rVert_t\, ds +\int_t^\infty \lVert T(t,s)(\Id-P(s))y(s)\rVert_t\, ds \\
&\le D \int_0^t  e^{-\lambda (\rho(t)-\rho(s))} \lVert y(s)\rVert_s \, ds +D\int_t^\infty e^{-\lambda (\rho(s)-\rho(t))}\lVert y(s)\rVert_s\, ds \\
&\le D\int_0^t \lVert y(s)\rVert_s \, ds +\int_t^\infty \lVert y(s)\rVert_s\, ds
= D\,\| y\|_1,
\end{split}
\]
for every $t\ge 0$, and thus $x\in Y_\infty$. On the other hand, it is easy to check that $x(0)\in Z$. Therefore, $x\in Y_\infty^Z$. 
Moreover, for $t\ge s\geq 0$ we have
\[
\begin{split}
x(t)-T(t,s)x(s) &=\int_0^tT(t,\tau)P(\tau)y(\tau)\, d\tau -T(t,s)\int_0^sT(s,\tau)P(\tau)y(\tau)\, d\tau \\
&\phantom{=}-\int_t^\infty T(t,\tau)(\Id-P(\tau))y(\tau)\, d\tau \\
&\phantom{=}+T(t,s)\int_s^\infty T(s,\tau)(\Id-P(\tau))y(\tau)\, d\tau \\
&=\int_s^t T(t,\tau)P(\tau)y(\tau)\, d\tau +\int_s^t T(t, \tau)(\Id-P(\tau))y(\tau)\, d\tau \\
&=\int_s^tT(t, \tau)y(\tau)\, d\tau,
\end{split}
\]
and therefore we conclude that~\eqref{532} holds.
In order to establish the uniqueness, it is sufficient to consider the case when $y=0$. Let $x\in Y_\infty^Z$  such that $$x(t)=T(t,s)x(s), \text{ for } t\ge s\ge 0.$$  Then, from \eqref{d2}  we have
\begin{align*}
\lVert x(0)\rVert_0 & = \| (\Id -P(0))x(0)\|_0  =\lVert T(0, t)(\Id -P(t))x(t)\rVert_0\\
&\le De^{-\lambda \rho(t)}\lVert x(t)\rVert_t \\
&\le  De^{-\lambda \rho(t)} \lVert x\rVert_\infty,
\end{align*}
for every $t\geq 0$.
Passing to the limit when $t\to \infty$, we conclude that $x(0)=0$, which implies that $x=0$.
\end{proof}

\begin{proposition}\label{p2}
Assume that the evolution family $\mathcal{T}$ admits a $\rho$-dichotomy with respect to a family of norms $\lVert \cdot \rVert_t$, $t\ge 0$, and set $Z=\Ker P(0)$. Then, for each $y\in Y_\infty'$ there exists a unique $x\in Y_\infty^Z$ such that
\begin{equation}\label{617}
x(t)=T(t,s)x(s)+\int_s^t \rho'(\tau)T(t, \tau)y(\tau)\, d\tau, \quad \text{for $t\ge s\ge 0$.}
\end{equation}
\end{proposition}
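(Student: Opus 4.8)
The plan is to mirror the proof of Proposition \ref{p1}, replacing the input space $Y_1$ by $Y_\infty'$ and accounting for the weight $\rho'(\tau)$ that now appears in the variation-of-constants formula \eqref{617}. First I would, given $y \in Y_\infty'$, define the candidate solution by
\[
x(t) = \int_0^t \rho'(s) T(t,s)P(s)y(s)\, ds - \int_t^\infty \rho'(s) T(t,s)(\Id - P(s))y(s)\, ds, \quad t \ge 0.
\]
To see that this is well defined and that $x \in Y_\infty$, I would estimate $\lVert x(t)\rVert_t$ using \eqref{d1} and \eqref{d2}, bounding $\lVert y(s)\rVert_s \le \lVert y\rVert_\infty'$ for a.e. $s$, which gives
\[
\lVert x(t)\rVert_t \le D\lVert y\rVert_\infty'\left(\int_0^t \rho'(s)e^{-\lambda(\rho(t)-\rho(s))}\, ds + \int_t^\infty \rho'(s)e^{-\lambda(\rho(s)-\rho(t))}\, ds\right).
\]
The substitution $u = \rho(s)$, $du = \rho'(s)\, ds$ turns each integral into a standard exponential integral bounded by $1/\lambda$, so $\lVert x(t)\rVert_t \le (2D/\lambda)\lVert y\rVert_\infty'$ uniformly in $t$; continuity of $x$ follows from the continuity of $s \mapsto T(t,s)x$, the dominated convergence theorem, and \eqref{pro.inv}. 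One also checks $x(0) = -\int_0^\infty \rho'(s)T(0,s)(\Id-P(s))y(s)\, ds \in \Ker P(0) = Z$, so $x \in Y_\infty^Z$.

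Next I would verify that $x$ satisfies \eqref{617}. This is the same algebraic manipulation as in Proposition \ref{p1}: splitting $\int_0^t = \int_0^s + \int_s^t$ and $\int_t^\infty$ combined with the pushed-forward $T(t,s)\int_s^\infty$, using the invariance \eqref{pro} and the cocycle property, the terms over $[0,s]$ and $[s,\infty)$ cancel against $T(t,s)x(s)$ and one is left with $\int_s^t \rho'(\tau)T(t,\tau)P(\tau)y(\tau)\, d\tau + \int_s^t \rho'(\tau)T(t,\tau)(\Id-P(\tau))y(\tau)\, d\tau = \int_s^t \rho'(\tau)T(t,\tau)y(\tau)\, d\tau$. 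For uniqueness, it suffices to treat $y = 0$: if $x \in Y_\infty^Z$ satisfies $x(t) = T(t,s)x(s)$ for all $t \ge s \ge 0$, then since $x(0) \in \Ker P(0)$ we get $x(0) = (\Id - P(0))x(0) = T(0,t)(\Id-P(t))x(t)$, and \eqref{d2} yields $\lVert x(0)\rVert_0 \le De^{-\lambda\rho(t)}\lVert x\rVert_\infty \to 0$ as $t \to \infty$, forcing $x(0) = 0$ and hence $x \equiv 0$ by the evolution property.

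The main obstacle I anticipate is the interaction between the weight $\rho'$, the measurability of $y$ (which is only Bochner measurable, not continuous, as an element of $Y_\infty'$), and the continuity claim for $x$. In Proposition \ref{p1} the integrand was simply $T(t,\tau)y(\tau)$ against the measure $d\tau$; here the extra factor $\rho'(\tau)$ must be carried through both the growth estimates and the cancellation computation, and one must be careful that $\rho' > 0$ everywhere (which follows from $\rho$ being strictly increasing and $C^1$ — strictly speaking one should note $\rho' \ge 0$ and that the change of variables $u = \rho(s)$ is legitimate since $\rho$ is a $C^1$ bijection of $[0,\infty)$). Verifying that $t \mapsto x(t)$ is genuinely continuous (and not merely that $\lVert x(t)\rVert_t$ is finite) requires a dominated-convergence argument in which the dominating function is integrable precisely because of the exponential decay after the $u = \rho(s)$ substitution; this is the step where the specific form of \eqref{617}, with $\rho'$ matching $d\rho$, is essential and is the technical heart of the argument.
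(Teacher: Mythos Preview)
Your proposal is correct and follows essentially the same approach as the paper: the same candidate solution via the Green-type formula with the extra $\rho'(s)$ factor, the same $\|x(t)\|_t \le (2D/\lambda)\|y\|_\infty'$ estimate from \eqref{d1}--\eqref{d2} after the substitution $u=\rho(s)$, the same verification of \eqref{617} by splitting the integrals, and the same uniqueness argument borrowed from Proposition~\ref{p1}. The paper is in fact terser than your plan (it omits the continuity discussion and simply writes ``a simple computation shows that \eqref{617} holds''), so your extra care about continuity and the legitimacy of the change of variables only adds detail, not a different route.
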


\begin{proof}
Take $y\in Y_\infty'$. For $t\ge 0$,  set
\[
x(t)=\int_0^t \rho'(s)T(t,s)P(s)y(s)\, ds-\int_t^\infty \rho'(s) T(t,s)(\Id-P(s))y(s)\, ds.
\]
It follows from~\eqref{d1} and~\eqref{d2} that
\[
\begin{split}
\lVert x(t)\rVert_t &\le \int_0^t \rho'(s) \lVert  T(t,s)P(s)y(s)\rVert_t\, ds +\int_t^\infty \rho' (s)\lVert T(t,s)(\Id-P(s)) y(s)\rVert_t\, ds \\
&\le  D \int_0^t  \rho'(s) e^{-\lambda (\rho(t)-\rho(s))} \lVert y(s)\rVert_s \, ds +D\int_t^\infty \rho'(s)e^{-\lambda (\rho(s)-\rho(t))}\lVert y(s)\rVert_s\, ds \\
&\le D\lVert y\rVert_\infty' \bigg{(}\int_0^t  \rho'(s) e^{-\lambda (\rho(t)-\rho(s))} \, ds+\int_t^\infty \rho'(s)e^{-\lambda (\rho(s)-\rho(t))}\, ds \bigg{)} \\
&\le \frac{2D}{\lambda}\lVert y\rVert_\infty', \text{ for every $t\ge 0$.}
\end{split}
\]
Since $x(0) \in Z$, we conclude that $x\in Y_\infty^Z$.
A simple computation shows that \eqref{617} holds.
The uniqueness part can be established as in the proof of Proposition~\ref{p1}.
\end{proof}

\section{From admissibility to dichotomy}\label{AD}

The aim of this section is to prove that the admissibility of the pairs
$\left(Y_\infty^Z,Y_1\right)$,  $\left(Y_\infty^Z,Y_\infty'\right)$ for a closed subspace $Z\subset X$ yields the existence of a $\rho$-dichotomy with respect to the family of norms $\{\lVert \cdot \rVert_t\}_{t\ge 0}$.
More precisely, our goal is to establish the following result.

\begin{theorem}\label{th.ad.dich}
Assume that there exists a closed subspace $Z\subset X$ such that:
\begin{enumerate}
\item[(i)]  for each $y\in Y_1$ there exists a unique $x\in Y_\infty^Z$ satisfying \eqref{532};
\item[(ii)] for each $y\in Y_\infty'$ there exists a unique $x\in Y_\infty^Z$  satisfying \eqref{617}.
\end{enumerate}
Then, the evolution family $\mathcal{T}$ admits a $\rho$-dichotomy with respect to the family of norms $\lVert \cdot \rVert_t$, $t\ge 0$.
\end{theorem}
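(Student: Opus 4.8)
The plan is to adapt the Perron-type strategy of \cite{Da.1}, with the modifications forced by the fact that here no growth bound is imposed on $\mathcal{T}$ and two admissibility conditions are in play. First I would promote (i) and (ii) to quantitative estimates. By the uniqueness in each, the assignments $y\mapsto x$ (from $Y_1$ to $Y_\infty^Z$ in case (i), from $Y_\infty'$ to $Y_\infty^Z$ in case (ii)) are well defined and linear, and I would check that they are closed operators: if $y_n\to y$ in the input space and the corresponding solutions converge to some $x$ in $Y_\infty$, then on each compact subinterval $\lVert\cdot\rVert_\tau$ is equivalent to $\lVert\cdot\rVert$ by \eqref{427} and $\tau\mapsto T(r,\tau)$ is strongly continuous, so one passes to the limit in \eqref{532} (resp.\ \eqref{617}) to see that $x$ solves the same equation, and uniqueness identifies it with the solution of $y$. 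The closed graph theorem then yields $M>0$ with $\lVert x\rVert_\infty\le M\lVert y\rVert_1$ in case (i) and $\lVert x\rVert_\infty\le M\lVert y\rVert_\infty'$ in case (ii).

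Next I would build the projections. For $t\ge 0$ let $\mathcal{U}(t)$ be the set of $\xi\in X$ admitting a backward orbit into $Z$, that is, a continuous $v\colon[0,t]\to X$ with $v(t)=\xi$, $v(0)\in Z$ and $v(r)=T(r,q)v(q)$ for $t\ge r\ge q\ge 0$ (so $\mathcal{U}(0)=Z$), and let $\mathcal{S}(t)=\{\xi\in X:\ \sup_{r\ge t}\lVert T(r,t)\xi\rVert_r<\infty\}$. Uniqueness in (i) (with $y=0$) gives $\mathcal{S}(t)\cap\mathcal{U}(t)=\{0\}$: gluing a backward orbit realizing $\xi\in\mathcal{U}(t)$ to the bounded forward orbit $r\mapsto T(r,t)\xi$ produces an element of $Y_\infty^Z$ fixed by the dynamics, hence $0$. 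Existence in (i), applied to test functions $y=\phi\cdot T(\cdot,t)\eta$ with $\phi$ a continuous bump on $[t,t+1]$ of integral $1$, gives $\mathcal{S}(t)+\mathcal{U}(t)=X$: the solution $x\in Y_\infty^Z$ then satisfies $x(r)=T(r,t)\bigl(x(t)+\eta\bigr)$ for $r\ge t+1$, so $x(t)+\eta\in\mathcal{S}(t)$, while $x|_{[0,t]}$ realizes $x(t)\in\mathcal{U}(t)$, whence $\eta=(x(t)+\eta)-x(t)$. Let $P(t)$ be the projection with image $\mathcal{S}(t)$ and kernel $\mathcal{U}(t)$; then $\Ker P(0)=Z$, and \eqref{pro} follows by transporting realizing orbits while \eqref{pro.inv} follows since $T(t,s)$ carries $\mathcal{U}(s)$ bijectively onto $\mathcal{U}(t)$ (injectivity: $T(t,s)\xi=0$ with $\xi\in\mathcal{U}(s)$ forces $\xi\in\mathcal{S}(s)\cap\mathcal{U}(s)$; surjectivity: restrict a realizing orbit). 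Moreover $P(t)$ is bounded: $\eta\mapsto x(t)$, with $x$ the solution of $\phi\cdot T(\cdot,t)\eta$, is bounded by the estimate above together with the local boundedness of $\tau\mapsto T(\tau,t)$, and $(\Id-P(t))\eta=-x(t)$.

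It remains to prove \eqref{d1} and \eqref{d2}. Fix $\xi\in\Ima P(s)=\mathcal{S}(s)$, set $u(\tau)=T(\tau,s)\xi$ and $g(\tau)=\lVert u(\tau)\rVert_\tau$ (the case where $g$ vanishes eventually being trivial). Feed into (ii) the test functions $y_{\tau_0}(\tau)=\mathbf 1_{[s,\tau_0]}(\tau)\,u(\tau)/g(\tau)$, of $Y_\infty'$-norm $1$: the weighted integral in \eqref{617} satisfies $\rho'(\tau)T(r,\tau)y_{\tau_0}(\tau)=\rho'(\tau)g(\tau)^{-1}u(r)$ for $s\le\tau\le\min(r,\tau_0)$, so the solution has the form $T(r,s)x(s)+\bigl(\int_s^{\min(r,\tau_0)}\rho'(\tau)g(\tau)^{-1}\,d\tau\bigr)u(r)$; since $x(s)\in\mathcal{U}(s)$ and its forward orbit is thereby seen to be bounded, the uniqueness forces $x(s)=0$, and boundedness of the solution gives $g(r)\int_s^{\min(r,\tau_0)}\rho'(\tau)g(\tau)^{-1}\,d\tau\le M$ for all $r\ge s$ and all $\tau_0$. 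Writing $G(r)=\int_s^{r}\rho'(\tau)g(\tau)^{-1}\,d\tau$, the inequality $g(r)G(r)\le M$ together with $G'=\rho'/g$ gives $(\ln G)'\ge\rho'/M$, hence $g$ decays like $e^{-(\rho(r)-\rho(s))/M}$; combined with a uniform-in-$s$ bound on $g$ over an initial $\rho$-window (here condition (i), with integrable localized test functions, is what replaces the bounded-growth hypothesis of \cite{Da.1}), this yields \eqref{d1} with some $D,\lambda>0$. Estimate \eqref{d2} is obtained by the symmetric argument on backward orbits inside the fibres $\mathcal{U}(\cdot)$, where $T$ is invertible by \eqref{pro.inv}.

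The step I expect to be the genuine obstacle is the last one, and within it the uniformity in the initial time $s$: because $\lVert T(t,s)\rVert$ is a priori uncontrolled, one cannot bound the orbits on short time windows by growth estimates, and the two admissibility conditions must be combined to squeeze out constants that do not depend on $s$. Managing the noninvertibility — keeping the ``inverse'' branch of $\mathcal{T}$ confined to the unstable fibres throughout these arguments — and checking that all the test functions genuinely lie in $Y_1$ and $Y_\infty'$ (via \eqref{427} and the continuity of $t\mapsto\lVert x\rVert_t$) are the remaining technical points; the overall bookkeeping, rather than any single idea, is where the work lies.
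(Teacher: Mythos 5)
Your strategy coincides with the paper's own proof in all essentials: the solution operators of \eqref{532} and \eqref{617} are made bounded via the closed graph theorem, the fibres are the same ones (the bounded-forward-orbit space and $T(t,0)Z$), the splitting, its invariance and the invertibility on the unstable fibres come from the same localized $Y_1$ test functions, and the stable decay comes from the same normalized-orbit $Y_\infty'$ test functions. The only genuine difference is cosmetic: you extract exponential decay from the integral inequality by a Gronwall estimate on $G(r)=\int_s^r\rho'(\tau)\lVert T(\tau,s)v\rVert_\tau^{-1}\,d\tau$, whereas the paper iterates over windows of fixed $\rho$-length; both work, provided $G$ is bounded below at the edge of an initial window, for which the a priori bound mentioned below is exactly what is needed.

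The one step where your argument, as written, would not deliver what is required is the boundedness of the projections, which must be \emph{uniform}: one needs \eqref{7}, i.e. $\lVert P(t)v\rVert_t\le M\lVert v\rVert_t$ with $M$ independent of $t$, because \eqref{8} and its unstable analogue \eqref{841} are proved only for $v\in S(t)$, resp. $v\in U(t)$, and \eqref{d1}--\eqref{d2} for arbitrary $x$ are obtained by composing them with $P(s)$ and $\Id-P(s)$. Your bump $\phi$ on $[t,t+1]$ is fine for the algebraic splitting $S(t)+U(t)=X$, but for the quantitative bound it controls $\lVert x(t)\rVert_t$ only by $M\sup_{\tau\in[t,t+1]}\lVert T(\tau,t)\eta\rVert_\tau$, and in the absence of any growth hypothesis neither $\sup_{\tau\in[t,t+1]}\lVert T(\tau,t)\rVert$ nor the factor $Ce^{\epsilon\rho(\tau)}$ from \eqref{427} is uniform in $t$; ``local boundedness of $\tau\mapsto T(\tau,t)$'' therefore yields only a $t$-dependent bound on $P(t)$, which is not enough. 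The cure is the shrinking-support device used in the paper: feed in $g_h=\frac1h\chi_{[t,t+h]}T(\cdot,t)\eta$ and let $h\to 0$, using the continuity of $\tau\mapsto\lVert T(\tau,t)\eta\rVert_\tau$, to get $\lVert P(t)\eta\rVert_t\le(1+\lVert G_Z\rVert)\lVert\eta\rVert_t$. The same device, applied to a ramp $\phi_h$ times the orbit with $T_Z(\phi_h u)=g_h$, is what actually produces the uniform forward bound $\sup_{r\ge s}\lVert T(r,s)v\rVert_r\le\lVert G_Z\rVert\,\lVert v\rVert_s$ for $v\in S(s)$ (\eqref{1021} in the paper), which you correctly identify as the substitute for the bounded-growth hypothesis of \cite{Da.1} but do not derive; it, together with its backward counterpart \eqref{921} and the limit $\tau\to\infty$ in the unstable estimate, needs to be spelled out before \eqref{d2} can be dismissed as ``symmetric''.
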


\begin{proof}
Let $$T_Z:\mathcal D(T_Z)\subset Y_\infty^Z \to Y_1, \quad T_Zx=y,$$
where
$$\mathcal D(T_Z)=\left\{ x\in Y_\infty^Z:\; \text{ there exists } y\in Y_1 \text{ satisfying } \eqref{532}\right\}.$$
Furthermore, let
$$T_Z':\mathcal D(T_Z')\subset Y_\infty^Z \to Y_\infty', \quad T_Z'x=y,$$
where
$$\mathcal D(T_Z')=\left\{ x\in Y_\infty^Z:\; \text{ there exists } y\in Y_\infty' \text{ satisfying } \eqref{617}\right\}.$$

\medskip

\begin{lemma}
The operators  $T_Z \colon \mathcal D(T_Z) \to Y_1$,  $T_Z' \colon \mathcal D(T_Z') \to Y_\infty'$ are well-defined, linear  and closed.
\end{lemma}
\begin{proof}[Proof of the lemma]
Assume that $x\in Y_\infty^Z$ and $y_1, y_2\in Y_1$ such that
\[
x(t)=T(t,\tau)x(\tau)+\int_\tau^t  T(t,s)y_i(s)\, ds,
\]
for $t\ge \tau \ge 0$ and $i\in \{1, 2\}$. Hence,
\[
\int_\tau^t  T(t,s)(y_1(s)-y_2(s))\, ds=0, \text{ for $t> \tau \ge 0$.}
\]
Dividing by $t-\tau$ and letting $t-\tau \to 0$, it follows from the Lebesgue differentiation theorem that
\[
y_1(t)=y_2(t) \quad \text{for  almost every $t\ge 0$.}
\]
We conclude that  $y_1=y_2$ in $Y_1$. Thus, $T_Z$ is well-defined and, by definition it is linear.

We now show that $T_Z$ is closed. Let $(x_n)_{n\in \N}$ be a sequence in $\mathcal D(T_Z)$ converging to $x\in Y_\infty^Z$ such that $y_n=T_Z x_n$ converges to $y\in Y_1$. Then, for $t\ge \tau \ge 0$ we have that
\[
x(t)-T(t,\tau)x(\tau) =\lim_{n\to \infty} (x_n(t)-T(t,\tau)x_n(\tau))
=\lim_{n\to \infty}\int_\tau^t  T(t,s)y_n(s)\, ds.
\]
On the other hand,  we have
\begin{align*}
\bigg{\lVert}\int_\tau^t   T(t,s)y_n(s)\, ds-\int_\tau^t   T(t,s)y(s)\, ds \bigg{\rVert} &\le M\int_\tau^t \lVert y_n(s)-y(s)\rVert\, ds\\
&\le M\int_\tau^t \lVert y_n(s)-y(s)\rVert_s \, ds\\
&\le M\lVert y_n-y\rVert_1,
\end{align*}
where $M=M(t,\tau)=\sup \{\lVert T(t,s)\rVert: s\in [\tau, t]\}$ is finite by the Banach-Steinhaus theorem. Since $y_n \to y$ in $Y_1$, we conclude that
\[
\lim_{n\to \infty} \int_\tau^t   T(t,s)y_n(s)\, ds=\int_\tau^t  T(t,s)y(s)\, ds,
\]
and therefore~\eqref{532} holds. We conclude that $x\in \mathcal D(T_Z)$ and $T_Zx=y$. Therefore, $T_Z$ is a closed linear operator. Similarly, one can show that $T_Z'$ is well-defined, linear  and closed.
\end{proof}

By the assumption in Theorem \ref{th.ad.dich}, the linear operators $T_Z$, $T_Z'$ are bijective, and by previous lemma and the Closed Graph Theorem they
have bounded inverse $G_Z \colon Y_1 \to Y_\infty^Z$ and $G_Z' \colon Y_\infty' \to Y_\infty^Z$, respectively.

For $\tau \ge 0$, set
\[
S(\tau)=\bigg{\{} v\in X: \sup_{t\ge \tau} \lVert T(t,\tau)v\rVert_t <\infty \bigg{\}} \quad \text{and} \quad U(\tau)=T(\tau, 0)Z.
\]
Clearly, $S(\tau)$ and $U(\tau)$ are subspaces of $X$ for each $\tau \ge 0$.
\begin{lemma}
For $\tau \ge 0$, we have that
\begin{equation}\label{1016}
X=S(\tau)\oplus U(\tau).
\end{equation}
\end{lemma}
\begin{proof}[Proof of lemma]
Let $\tau \ge 0$ and take $v \in X$. Set
$$g(s) = \chi_{[\tau,\tau+1]}(s)T (s, \tau)v,\;s\geq 0.$$
Clearly,
$g\in Y_1$. Since $T_Z$ is invertible, there exists $h\in \mathcal{ D}(T_Z)\subset Y_\infty^Z$ such that $T_Z h= g$. It follows
from~\eqref{532} that $$h(t) = T (t, \tau)(h(\tau) + v) \text{ for }t \ge \tau + 1.$$ Since $h \in Y_\infty$, we conclude that
$h(\tau)+v\in S(\tau)$.
Similarly, it follows from~\eqref{532} that $$h(\tau) = T (\tau, 0)h(0).$$ Since $h(0)\in Z$,
we have that $h(\tau) \in U(\tau)$ and thus $$v=(h(\tau)+v)+(-h(\tau))\in S(\tau)+U(\tau).$$
We have  proved that $X=S(\tau)+ U(\tau).$

Take now $v\in S(\tau)\cap U(\tau)$. Then, there exists $z\in Z$ such that $v=T(\tau, 0)z$. We consider a function $h\colon [0, \infty)\to X$, defined by
$$h(t)=T(t,0)z \text{  for } t\ge 0.$$
Clearly, $h\in Y_\infty^Z$. Since $h(t)=T(t,s)h(s)$ for all $t\geq s\geq 0$, it follows that $T_Zh=0$ and thus $h=0$. We conclude that $v=h(\tau)=0$, and  hence $S(\tau)\cap U(\tau)=\{0\}$. This completes the proof of the lemma.
\end{proof}

Let $P(\tau)\colon X \to S(\tau)$  and $Q(\tau)\colon X\to U(\tau)$ be the projections associated with the decomposition~\eqref{1016}, with $P(\tau)+Q(\tau)=\Id$.  Observe that~\eqref{pro} holds. Indeed, observe that
\[
T(t,\tau)S(\tau)\subset S(t) \quad \text{and} \quad T(t,\tau)U(\tau)\subset U(t), \quad \text{for $t\ge \tau \ge 0$.}
\]
Hence, we have that for every $x\in X$ and $t\ge \tau \ge 0$,
\[
P(t)T(t,\tau)x=P(t)T(t,\tau)P(\tau)x+P(t)T(t,\tau)Q(\tau)x=T(t,\tau)P(\tau)x.
\]
We conclude that~\eqref{pro} holds.
\begin{lemma}
For $t\ge \tau \ge 0$, the restriction $T(t,\tau)\rvert_{U(\tau)} \colon U(\tau)\to U(t)$ is invertible.
\end{lemma}

\begin{proof}[Proof of the lemma]
Let $t\geq \tau\geq 0$ and
take $x\in U(t)$. Then, there exists $z\in Z$ such that $x=T(t,0)z$. Since $T(\tau, 0)z\in U(\tau)$ and $x=T(t, \tau)T(\tau, 0)z$, we conclude that $T(t,\tau)\rvert_{U(\tau)}$ is surjective.

Let now $x\in U(\tau)$ such that $T(t,\tau)x=0$. Take $z\in Z$ such that $x=T(\tau, 0)z$. We define $u\colon [0, \infty)\to X$ by $u(s)=T(s,0)z$, $s\ge 0$.
Since $u(s)=0$ for $s\ge t$, we have that $u\in Y_\infty^Z$ and $T_Zu=0$.  Consequently,  $u=0$ and $x=u(\tau)=0$. This proves that $T(t,\tau)\rvert_{U(\tau)}$ is also injective. The proof of the lemma is completed.
\end{proof}

\begin{lemma}
There exists $M>0$ such that
\begin{equation}\label{7}
\lVert P(\tau)v\rVert_{\tau} \le M\lVert v\rVert_{\tau}, \text{ for all $v\in X$ and  $\tau \ge 0$.}
\end{equation}
\end{lemma}

\begin{proof}[Proof of the lemma]
Take $v\in X$ and  $\tau \ge 0$ . Moreover, given $h>0$, we define a function $g_h \colon [0,\infty) \to X$ by
\[
g_h(t)=\frac{1}{h}\chi_{[\tau,\tau+h]}(t)T(t,\tau)v.
\]
Clearly, $g_h\in Y_1$ and thus there exists $x_h\in \mathcal{D}(T_Z)$ such that $T_Zx_h=g_h$. We have
\[
\begin{split}
\lVert P(\tau)v\rVert_{\tau} & =\lVert x_h(\tau)+v\rVert_{\tau}
\le \lVert x_h(\tau)\rVert_{\tau}+\lVert v\rVert_{\tau}
\leq \lVert G_Zg_h\rVert_{\infty}+\lVert v\rVert_{\tau}.
\end{split}
\]
Moreover,
\[
\lVert G_Zg_h\rVert_{\infty}\le \lVert G_Z\rVert \cdot \lVert g_h\rVert_1 = \lVert G_Z\rVert \,\frac{1}{h}\int_{\tau}^{\tau+h}\lVert T(t,\tau)v\rVert_t \, dt.
\]
Letting $h\to 0$, we obtain
\[
\lVert P(\tau)v\rVert_{\tau} \le (1+\lVert G_Z\rVert)\,\lVert v\rVert_{\tau},
\]
and we conclude that~\eqref{7} holds for $M=1+\lVert G_Z\rVert$.
\end{proof}

\begin{lemma}\label{1047}
There exist constants $\lambda, D>0$ such that
\begin{equation}\label{8}
\lVert T(t,\tau)v\rVert_t \le De^{-\lambda (\rho(t)-\rho(\tau))}\lVert v \rVert_{\tau}, \text{ for $t\ge \tau \ge 0$ and $v\in S(\tau)$.}
\end{equation}
\end{lemma}
\begin{proof}[Proof of the lemma]
Fix $\tau\geq 0$ and  let $v\in S(\tau)$.  We consider the function
\[
u\colon [0,\infty) \to X,\quad
u(t)=\chi_{[\tau,\infty)}(t)T(t,\tau)v.
\]
Moreover, for any fixed $h> 0$, we define two functions $\phi_h \colon [0,\infty) \to \R$ and $g_h\colon [0, \infty) \to X$ by
\[
\phi_h(t)=
\begin{cases}
0, & 0\le t\le \tau, \\
\frac{1}{h}(t-\tau), & \tau \le t \le \tau +h, \\
1, & t\geq \tau +h,
\end{cases}
\]
and
\[
\quad g_h(t)=\frac{1}{h}\,\chi_{[\tau,\tau+h]}(t)\,T(t,\tau)v, \quad t\ge 0.
\]
It is easy to show that $g_h\in Y_1$,  $\phi_h u\in \mathcal{D}(T_Z)$  and $T_Z(\phi_h u)=g_h$. We have
\[
\begin{split}
\sup\limits_{t\geq \tau+h}\lVert u(t)\rVert_t&= \sup\limits_{t\geq \tau+h} \lVert \phi_h(t) u(t)\rVert_t
\le \lVert \phi_h u\rVert_{\infty}=\lVert G_Zg_h\rVert_{\infty}\\
& \le \lVert G_Z\rVert \cdot \lVert g_h\rVert_1 \\
&=\lVert G_Z\rVert \frac{1}{h} \int_{\tau}^{\tau+h}\lVert u(s)\rVert_s \, ds.
\end{split}
\]
Hence, letting $h\to 0$ we obtain the inequality
\[
\lVert u(t)\rVert_t \le \lVert G_Z\rVert \cdot \lVert v\rVert_{\tau},  \quad  \text{for every }  t\ge \tau.
\]
Thus,
\begin{equation}\label{1021}
\lVert T(t, \tau)v\rVert_t \le \lVert G_Z\rVert \cdot \lVert v\rVert_\tau, \quad    \text{for every }  t\ge \tau.
\end{equation}

Let us take $t\ge \tau$ and  $v\in S(\tau)$ such that $T(t, \tau)v\neq 0$, thus $T(s,\tau)v\neq 0$ for all $s\in[\tau,t]$. Let us consider $x, y\colon [0, \infty) \to X$  defined by
\[
y(s)=\chi_{[\tau,t]}(s)\,
\frac{T(s,\tau) v}{\lVert T(s,\tau)v\rVert_s}, \quad s\geq 0,
\]
and
\[
x(s)=\begin{cases}
0, & \text{ $0\le s \le \tau$,} \\
\int_\tau^s  \rho'(r) \frac{T(s, \tau)v}{\lVert T(r, \tau)v\rVert_r} \, dr, & \text{ $\tau < s \le t$,}\\
\int_\tau^t \rho'(r) \frac{T(s, \tau)v}{\lVert T(r, \tau)v\rVert_r}\, dr, & \text{ $s>t$.}
\end{cases}
\]
Note that $y\in Y_\infty'$ and $\lVert y\rvert_\infty'=1$. Furthermore, since $v\in S(\tau)$ we get that
$$\| x(s) \|_s\leq \int_\tau^t \frac{\rho'(r)}{\lVert T(r, \tau)v\rVert_r} \, dr \, \| T(s,\tau)v\|_s\leq a_{t,\tau,v} \,\sup\limits_{r\geq \tau} \|T(r,\tau)v\|_r<\infty,$$
for all $s\geq \tau$, where
$$a_{t,\tau,v}= \int_\tau^t \frac{\rho'(r)}{\lVert T(r, \tau)v\rVert_r} \, dr<\infty,$$
and thus $x\in Y_\infty^Z$.
It is  straightforward to show that $T_Z'x=y$. Consequently,
\[
\lVert x\rVert_\infty =\lVert G_Z' y\rVert_\infty \le \lVert G_Z'\rVert \cdot \lVert y\rVert_\infty' = \lVert G_Z'\rVert.
\]
Therefore,
\begin{equation}\label{1245}
\lVert G_Z'\rVert \ge \lVert x\rVert_\infty \ge \lVert x(t)\rVert_t = \lVert T(t, \tau)v\rVert_t \int_\tau^t  \frac{\rho'(r)}{\lVert T(r, \tau)v\rVert_r}\, dr.
\end{equation}
From~\eqref{1021} it follows that
$$\frac{1}{\lVert T(r, \tau)v\rVert_r}\geq \frac{1}{\|G_Z\|\cdot \|v\|_\tau}, \text{ for all } r\in [\tau,t],$$
and thus, from  \eqref{1245} we get
\begin{equation*}\label{eq.intermed}
\lVert G_Z'\rVert  \cdot \lVert G_Z\rVert \cdot \lVert v\rVert_\tau \ge  \lVert T(t, \tau)v\rVert_t  \, (\rho(t)-\rho(\tau)), \text{ for $t\geq \tau$ and $v\in S(\tau)$.}
\end{equation*}
Consequently,
$$(t-\tau) \left\|T\left(\rho^{-1}(t),\rho^{-1}(\tau)\right)v\right\|_{\rho^{-1}(t)}\leq \| G_Z'\|  \cdot \| G_Z\| \cdot \| v\|_{\rho^{-1}(\tau)}, $$
for $t\geq \tau$ and $v\in S\left(\rho^{-1}(\tau)\right)$.
Let $N_0\in\mathbb{N}^*$ such that $N_0>e \| G_Z'\|  \cdot \| G_Z\|$, and let $t\geq \tau+N_0$. Then,
\begin{align*}
N_0 \left\|T\left(\rho^{-1}(t),\rho^{-1}(\tau)\right)v\right\|_{\rho^{-1}(t)}&\leq (t-\tau) \left\|T\left(\rho^{-1}(t),\rho^{-1}(\tau)\right)v\right\|_{\rho^{-1}(t)}\\
&\leq \| G_Z'\|  \cdot \| G_Z\| \cdot \| v\|_{\rho^{-1}(\tau)},
\end{align*}
which implies that
there exists $N_0\in \N^*$ such that
\begin{equation}\label{1044}
\lVert T(\rho^{-1}(t), \rho^{-1}(\tau))v\rVert_{\rho^{-1}(t)} \le \frac{1}{e}\lVert v\rVert_{\rho^{-1}(\tau)},
\end{equation}
for  $t\ge \tau$ with  $t-\tau \ge N_0$ and $v\in S\left(\rho^{-1}(\tau)\right)$.
Take an arbitrary $t\ge \tau$ with $t-\tau\geq N_0$ and write $t-\tau$ in the form $$t-\tau=kN_0+r, \;k=k(t,\tau)\in \N^*\text{ and } r=r(t,\tau)\in[0,N_0).$$
Observing that
\[
\begin{split}
&T\left((\rho^{-1}(t), \rho^{-1}( \tau)\right) \\
&=T\left(\rho^{-1}(t), \rho^{-1}(\tau+kN_0)\right)\prod_{j=0}^{k-1} T\left( \rho^{-1}(\tau+(k-j)N_0), \rho^{-1}(\tau+(k-j-1)N_0)\right),
\end{split}
\]
it follows from~\eqref{1021} and~\eqref{1044} that
\[
\begin{split}
\lVert T\left(\rho^{-1}(t), \rho^{-1}( \tau)\right) v\rVert_{\rho^{-1}(t)} &\le \lVert G_Z\rVert \,e^{-k}\, \lVert v\rVert_{\rho^{-1}(\tau)} \\
&\le e\,\lVert G_Z\rVert \, e^{-\frac{1}{N_0} (t-\tau)}\,\lVert v\rVert_{\rho^{-1}(\tau)}, \\
\end{split}
\]
and thus~\eqref{8} holds with $\lambda=1/N_0$ and $D=e\,\lVert G_Z\rVert$. The proof of the lemma is completed.
\end{proof}
\begin{lemma}
There exist $\lambda, D>0$ such that
\begin{equation}\label{841}
\lVert T(t,\tau)v\rVert_t \le De^{-\lambda (\rho(\tau)-\rho(t))}\lVert v\rVert_\tau, \text{ for $0\le t\le \tau$ and $v\in U(\tau)$.}
\end{equation}
\end{lemma}

\begin{proof}[Proof of the lemma]
Take $\tau >0$ and $z\in Z$. We define a function $u\colon [0, \infty) \to X$ by
\[
u(t)=T(t,0)z, \quad \text{for $t\ge 0$.}
\]
For sufficiently small $h>0$, we define $\psi_h \colon [0, \infty)\to \R$,
\[
\psi_h(t)=\begin{cases}
1, & 0\le t\le \tau-h, \\
-\frac{t-\tau}{h}, & \tau-h \le t\le \tau, \\
0, & t\ge \tau.
\end{cases}
\]
Finally, we consider
\[
g_h\colon [0, \infty) \to X,\quad g_h=-\frac 1 h \chi_{[\tau-h, \tau]}\,u.
\]
It is easy to check that $g_h\in Y_1$, $\psi_h u\in \mathcal D(T_Z)$ and $T_Z(\psi_hu)=g_h$. Hence,
\[
\begin{split}
\sup\limits_{t\in [0, \tau-h ]}\lVert u(t)\rVert_t  &=\sup\limits_{t\in [0, \tau-h ]} \lVert  \psi_h(t) u(t)\rVert_t
\le \lVert \psi_hu\rVert_\infty
=\lVert G_Zg_h \rVert_\infty \\
&\le \lVert G_Z\rVert \cdot \lVert g_h \rVert_1 \\
&=\lVert G_Z\rVert \cdot \frac 1 h \int_{\tau-h}^\tau \lVert u(s)\rVert_s\, ds.
\end{split}
\]
Letting $h\to 0$, we get
\[
\lVert u(t)\rVert_t \le \lVert G_Z\rVert \cdot \lVert u(\tau)\rVert_{\tau}, \quad \text{for $0\le t\le \tau$,}
\]
which implies
\begin{equation}\label{921}
\lVert T(t,0)z\rVert_t \le \lVert G_Z\rVert \cdot \lVert T(\tau, 0)z\rVert_\tau, \quad \text{for $z\in Z$ and $0\le t\le \tau$.}
\end{equation}
Take now $z\in Z\setminus \{0\}$ and $0\le t\le \tau$. We define $x, y\colon [0, \infty) \to X$ by
\[
y(s)=\begin{cases}
-\frac{T(s, 0)z}{\lVert T(s, 0)z\rVert_s},  & \text{$0\le s\le \tau$,}\\
0, & \text{$s>\tau$,}
\end{cases}
\]
and
\[
x(s)=\begin{cases}
\int_s^\tau \rho'(r)\frac{T(s, 0)z}{\lVert T(r,0)z\rVert_r}\, dr, & \text{$0\le s\le \tau$,}\\
0, & \text{$s>\tau$.}
\end{cases}
\]
Observe that $y\in Y_\infty'$ and $\lVert y\rVert_\infty'=1$. Moreover, $x\in Y_\infty^Z$ and it is easy to check that $T_Z'x=y$. Hence,
\[
\lVert x\rVert_\infty=\lVert G_Z'y\rVert_\infty \le \lVert G_Z'\rVert.
\]
Consequently, for each $0\le s\le \tau$ we have
\[
 \lVert G_Z'\rVert \ge \lVert T(s, 0)z\rVert_s \int_s^\tau \rho'(r)\frac{1}{\lVert T(r,0)z\rVert_r}\, dr.
\]
Letting $\tau \to \infty$, we conclude that
\begin{equation}\label{1000}
\lVert G_Z'\rVert \ge \lVert T(s, 0)z\rVert_s \int_s^\infty \rho'(r)\frac{1}{\lVert T(r,0)z\rVert_r}\, dr \quad \text{for $s\ge 0$ and $z\in Z\setminus \{0\}$.}
\end{equation}
Take now $0\le t\le \tau$ and $z\in Z\setminus \{0\}$. It follows from~\eqref{921} and~\eqref{1000} that
\[
\begin{split}
\frac{1}{\lVert T(\rho^{-1}(t),0)z\rVert_{\rho^{-1}(t)}}  &\ge \frac{1}{\lVert G_Z'\rVert}\int_{\rho^{-1}(t)}^\infty \rho'(r)\frac{1}{\lVert T(r,0)z\rVert_r}\, dr \\
&\ge \frac{1}{\lVert G_Z'\rVert}\int_{\rho^{-1}(t)}^{\rho^{-1}(\tau)} \rho'(r)\frac{1}{\lVert T(r,0)z\rVert_r}\, dr \\
&\ge \frac{1}{\lVert G_Z'\rVert}\int_{\rho^{-1}(t)}^{\rho^{-1}(\tau)} \rho'(r)\frac{1}{\lVert G_Z\rVert \cdot \lVert T(\rho^{-1}(\tau), 0)z\rVert_{\rho^{-1}(\tau)}}\, dr \\
&=\frac{\tau-t}{\lVert G_Z'\rVert \cdot \lVert G_Z\rVert} \cdot \frac{1}{\lVert T(\rho^{-1}(\tau), 0)z\rVert_{\rho^{-1}(\tau)}}
\end{split}
\]
and thus
$$(\tau-t) \lVert T(\rho^{-1}(t),0)z\rVert_{\rho^{-1}(t)}\leq \lVert G_Z\rVert \cdot \lVert G_Z'\rVert \cdot \lVert T(\rho^{-1}(\tau), 0)z\rVert_{\rho^{-1}(\tau)}.$$
We conclude that there exists $N_0\in \N^*$ such that
\[
\lVert T(\rho^{-1}(t),0)z\rVert_{\rho^{-1}(t)} \le \frac{1}{e}\lVert T(\rho^{-1}(\tau), 0)z\rVert_{\rho^{-1}(\tau)},
\]
for $z\in Z$ and $0\le t\le \tau$ such that $\tau-t\ge N_0$.  Hence,
\[
\lVert T(\rho^{-1}(t), \rho^{-1}(\tau))v\rVert_{\rho^{-1}(t)}\le \frac{1}{e} \lVert v\rVert_{\rho^{-1}(\tau)},
\]
for $v\in U(\rho^{-1}(\tau))$ and $0\le t\le \tau$ such that $\tau-t\ge N_0$. By arguing as in the proof of Lemma~\ref{1047}, we find that there exist $\lambda, D>0$ such that
\[
\lVert T(\rho^{-1}(t), \rho^{-1}(\tau))v\rVert_{\rho^{-1}(t)}\le De^{-\lambda (\tau-t)} \lVert v\rVert_{\rho^{-1}(\tau)},
\]
for $v\in U(\rho^{-1}(\tau))$ and $0\le t\le \tau$, which readily implies the conclusion of the lemma.
\end{proof}
In order to complete the proof of the theorem, it is sufficient to observe that~\eqref{7}, \eqref{8} and~\eqref{841} imply that~\eqref{d1} and~\eqref{d2} hold.
\end{proof}

\begin{remark}
It is worth observing that in order to deduce the existence of a $\rho$-dichotomy we imposed two admissibility conditions. In the following two examples we will illustrate that this was necessary.
\end{remark}

\begin{example}
We consider an evolution family $\mathcal T=\{T(t,s)\}_{t\ge s\ge 0}$ given by
\[
T(t,s)=\Id, \quad t\ge s\ge 0.
\]
Furthermore, take $Z=\{0\}$ and let $\lVert \cdot \rVert_t=\lVert \cdot \rVert$ for $t\ge 0$. Then for each $y\in Y_1$, the unique $x\in Y_Z$ satisfying~\eqref{532} is given by
\[
x(t)=\int_0^t T(t,s) y(s)\, ds=\int_0^t  y(s)\, ds, \quad t\ge 0.
\]
Thus, the first assumption of Theorem~\ref{th.ad.dich} is fulfilled. On the other hand, $\mathcal T$ obviously doesn't admit  a $\rho$-dichotomy with respect to the family of norms $\lVert \cdot \rVert_t$, $t\ge 0$.
\end{example}
The following example is a simple modification of~\cite[Example 1]{Da.1}.
\begin{example}
 Let $X =\R$ with the standard Euclidean norm $\lvert \cdot \rvert$. Furthermore, let $\lVert \cdot \rVert_t=\lvert \cdot \rvert$ for $t\ge 0$ and take $Z=\{0\}$. Furthermore, let $\rho(t)=\ln (1+t)$ for $t\ge 0$.
We consider the sequence $(A_n)_{n\in \mathbb N}$ of operators on $X$ (which can of course be identified with numbers)  given by
\[
A_n=\begin{cases}
n & \text{if $n=2^l$ for some $l\in \mathbb N$,}\\
0 & \text{otherwise.}
\end{cases}
\]
Furthermore, for $t\ge s\ge 0$ we define
\[
T(t,s)=\begin{cases}
A_{\lfloor t\rfloor -1}\cdots A_{\lfloor s \rfloor}, & \text{$\lfloor t \rfloor \ge \lfloor s\rfloor+1$},\\
1, & \text{$\lfloor t\rfloor=\lfloor s\rfloor$.}
\end{cases}
\]
Then,  $\mathcal T=\{T(t,s)\}_{t\ge s\ge 0}$ is an evolution family.
By arguing as in~\cite[Example 1]{Da.1}, it is easy to check that the second assumption of Theorem~\ref{th.ad.dich} is satisfied and $\mathcal T$ doesn't admit a  $\rho$-dichotomy with respect to the family of norms $\lVert \cdot \rVert_t$, $t\ge 0$.
\end{example}

\section{Robustness of generalized dichotomies}\label{R}

In this section we apply our main results to prove that the concept of $\rho$-dichotomy with respect to a family $\{\lVert \cdot \rVert_t\}_{t\ge 0}$  of norms on $X$  persist under sufficiently small linear perturbations. As a consequence, we establish the robustness property of $\rho$-nonuniform exponential dichotomy.

\begin{theorem}\label{rob.1}
Assume that the evolution family $\{T(t,s)\}_{t\ge s \ge 0}$ admits a $\rho$-dichotomy with respect to a family $\{\lVert \cdot \rVert_t\}_{t\ge 0}$ of norms on $X$ satisfying
\[
\lVert x\rVert \le \lVert x\rVert_t \le Ce^{\epsilon\rho(t)} \lVert x\rVert, \quad \text{for $x\in X$ and $t\ge 0$,}
\]
for some $C>0$ and $\epsilon\geq 0$, such that
the mapping $t\mapsto \lVert x\rVert_t$ is continuous for each $x\in X$.
If $B:[0,\infty)\to \mathcal{B}(X)$ is a strongly continuous operator-valued function such that
\begin{equation}\label{eq.rob}
\|B(t)\|\leq \delta e^{-(\epsilon+a) \rho(t)} \rho'(t), \quad t\geq 0,
\end{equation}
for some $a>0$ and sufficiently small $\delta>0$, then the perturbed evolution family $\{U(t,s)\}_{t\ge s \ge 0}$ satisfying
\begin{equation}\label{eq.perturbed}
U(t,s)=T(t,s)+\int_s^t T(t,\tau)B(\tau)U(\tau,s)\,d\tau,\quad t\geq s\geq 0,
\end{equation}
admits a $\rho$-dichotomy with respect to the family of norms $\lVert \cdot \rVert_t$, $t\geq 0$.
\end{theorem}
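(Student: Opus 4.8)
The plan is to reduce the statement to the admissibility characterization of Theorem~\ref{th.ad.dich}, by showing that the two admissibility properties are inherited by the perturbed family $\{U(t,s)\}$. \emph{Step 1 (the unperturbed data).} Since $\mathcal T$ admits a $\rho$-dichotomy with respect to $\{\lVert\cdot\rVert_t\}_{t\ge 0}$, Propositions~\ref{p1} and~\ref{p2} give that, with $Z=\Ker P(0)$, for each $y\in Y_1$ there is a unique $x\in Y_\infty^Z$ satisfying~\eqref{532} and for each $y\in Y_\infty'$ a unique $x\in Y_\infty^Z$ satisfying~\eqref{617}. Exactly as in the proof of Theorem~\ref{th.ad.dich}, this means the closed linear operators $T_Z\colon\mathcal D(T_Z)\to Y_1$ and $T_Z'\colon\mathcal D(T_Z')\to Y_\infty'$ are bijective, with bounded inverses $G_Z\colon Y_1\to Y_\infty^Z$ and $G_Z'\colon Y_\infty'\to Y_\infty^Z$.

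\emph{Step 2 (the perturbation as a small multiplication operator).} Define $(\cA x)(t)=B(t)x(t)$ and $(\cA'x)(t)=\rho'(t)^{-1}B(t)x(t)$. Using the left inequality in~\eqref{427} together with the operator bound on the right of it, the hypothesis~\eqref{eq.rob}, and the substitution $u=\rho(t)$, $du=\rho'(t)\,dt$ (so that $\int_0^\infty e^{-a\rho(t)}\rho'(t)\,dt=\int_0^\infty e^{-au}\,du=1/a$ and $\esssup_{t\ge0}e^{-a\rho(t)}=1$), one checks for $x\in Y_\infty$ that
\[
\lVert \cA x\rVert_1\le\frac{C\delta}{a}\,\lVert x\rVert_\infty\qquad\text{and}\qquad \lVert \cA'x\rVert_\infty'\le C\delta\,\lVert x\rVert_\infty .
\]
Thus $\cA\colon Y_\infty\to Y_1$ and $\cA'\colon Y_\infty\to Y_\infty'$ are bounded, with norms that shrink to $0$ as $\delta\to0$.

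\emph{Step 3 (the two variation-of-constants equations coincide).} The key lemma is that, for a continuous $x\colon[0,\infty)\to X$ with $\sup_{t\ge0}\lVert x(t)\rVert_t<\infty$ and a fixed forcing term $g$,
\[
x(t)=U(t,s)x(s)+\int_s^t U(t,\tau)g(\tau)\,d\tau\quad(t\ge s\ge0)
\]
holds if and only if
\[
x(t)=T(t,s)x(s)+\int_s^t T(t,\tau)\bigl(B(\tau)x(\tau)+g(\tau)\bigr)\,d\tau\quad(t\ge s\ge0).
\]
This is proved by expanding $U$ via~\eqref{eq.perturbed}, interchanging the order of integration in the resulting double integral (Fubini), and—for the implication that recovers the $\{U(t,s)\}$-equation from the $\{T(t,s)\}$-one—a Gronwall argument on $[s,\infty)$ using that $\lVert T(t,\tau)\rVert$ and $\lVert B(\tau)\rVert$ are bounded on compact sets. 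Feeding in $g=y\in Y_1$, the $\{U(t,s)\}$-version of~\eqref{532} becomes the fixed-point equation $x=G_Z(\cA x+y)$ in $Y_\infty^Z$; feeding in $g(\tau)=\rho'(\tau)y(\tau)$ with $y\in Y_\infty'$ and factoring $\rho'(\tau)$ back out of the integrand, the $\{U(t,s)\}$-version of~\eqref{617} becomes $x=G_Z'(\cA'x+y)$. \emph{This step is the main obstacle}: everything else is either the functional-analytic machinery already in place or the elementary estimates of Step~2, whereas here one must argue carefully that no spurious bounded solutions appear.

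\emph{Step 4 (conclusion).} Fix $\delta>0$ small enough that $\lVert G_Z\rVert\,C\delta/a<1$ and $\lVert G_Z'\rVert\,C\delta<1$. Then $x\mapsto G_Z(\cA x+y)$ and $x\mapsto G_Z'(\cA'x+y)$ are contractions of the Banach space $Y_\infty^Z$, so each has a unique fixed point; by Step~3 these are precisely the unique elements of $Y_\infty^Z$ satisfying the perturbed forms of~\eqref{532} and~\eqref{617}. Hence $\{U(t,s)\}$ satisfies hypotheses (i) and (ii) of Theorem~\ref{th.ad.dich} with the same subspace $Z$ and the same family of norms (which still obeys~\eqref{427}), and Theorem~\ref{th.ad.dich} applied to $\{U(t,s)\}$ yields that $\{U(t,s)\}$ admits a $\rho$-dichotomy with respect to $\lVert\cdot\rVert_t$, $t\ge0$.
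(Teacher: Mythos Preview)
Your proof is correct and follows essentially the same route as the paper's: reduce to Theorem~\ref{th.ad.dich} by showing that the admissibility problems for $\{U(t,s)\}$ are small perturbations of those for $\{T(t,s)\}$, via the identity $T_Zx=U_Zx+\cA x$ (the paper's Lemma~\ref{lem.robust}, established there by the same Fubini computation you sketch in Step~3). The only cosmetic difference is that you phrase Step~4 as a contraction argument on $x=G_Z(\cA x+y)$, whereas the paper equips $\mathcal D(T_Z)$ with the graph norm and argues that $U_Z=T_Z-D$ is a small bounded perturbation of an invertible operator; these are equivalent formulations of the same Neumann-series fact.
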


\begin{proof}
Since $\{T(t,s)\}_{t\ge s \ge 0}$ admits a $\rho$-dichotomy with respect to the family  of norms $\lVert \cdot \rVert_t$, $t\ge 0$, it follows from Proposition \ref{p1} and Proposition \ref{p2} that there exists a closed subspace $Z\subset X$ such that the operators
$$T_Z \colon \mathcal D(T_Z)\subset Y_\infty^Z \to Y_1 \text{ and } T_Z' \colon \mathcal D(T_Z')\subset Y_\infty^Z \to Y_\infty',$$
defined in the proof of Theorem \ref{th.ad.dich}, are invertible and closed.
We consider the graph norms:
$$\|x\|_{T_Z}:=\|x\|_\infty+\|T_Zx\|_1, \quad \; x\in \mathcal D(T_Z),$$
and
$$\;\|x\|_{T_Z'}:=\|x\|_\infty+\|T_Z'x\|_\infty', \quad  x\in \mathcal D(T_Z').$$

Since $T_Z$, $T_Z'$ are closed, it follows that $\left( \mathcal D(T_Z),\|\cdot\|_{T_Z}\right)$, $\left( \mathcal D(T_Z'),\|\cdot\|_{T_Z'}\right)$ are Banach spaces. Furthermore,
$$T_Z:\left( \mathcal D(T_Z),\|\cdot\|_{T_Z}\right)\to \left( Y_1, \|\cdot\|_1 \right)$$
and
$$\quad T_Z':\left( \mathcal D(T_Z'),\|\cdot\|_{T_Z'}\right)\to \left( Y_\infty', \|\cdot\|_\infty' \right)$$
are bounded linear operators,  denoted simply by $T_Z$ and $T_Z'$, respectively.

We consider the linear operators $D:\mathcal D(T_Z)\to Y_1$, $D':\mathcal D(T_Z')\to Y_\infty'$ defined by
$$(Dx)(t)=B(t)x(t) \text{ and }
 (D'x)(t)=\frac{1}{\rho'(t)}B(t)x(t),\text{ for }t\geq 0.$$
One can easy check that these operators are well-defined. Furthermore, for each $x\in \mathcal D(T_Z)$ we have
\begin{align*}
\| Dx\|_1&=\int_0^\infty \|B(t)x(t)\|_t\,dt\\
&\leq C\int_0^\infty e^{\epsilon \rho(t)} \|B(t)x(t)\|\,dt\\
&\leq \delta C \int_0^\infty e^{-a  \rho(t)}\,\rho'(t) \, \|x(t)\|\,dt\\
&\leq \frac{\delta C}{a}\, \|x\|_\infty,
\end{align*}
and thus
\begin{equation}\label{eq.r1}
\| Dx\|_1\leq \frac{\delta C}{a}\, \|x\|_{T_Z},\quad x\in \mathcal D(T_Z).
\end{equation}
On the other hand, for $x\in \mathcal D(T_Z')$ we get
\begin{align*}
\|(D'x)(t)\|_t &=\frac{1}{\rho'(t)}\|B(t)x(t)\|_t\\
&\leq \frac{1}{\rho'(t)} C e^{\epsilon \rho(t)}\|B(t)x(t)\|\\
&\leq \delta \,C  e^{-a \rho(t)} \|x(t)\|\\
&\leq \delta \, C\,\|x\|_{T_Z'},
\end{align*}
for all $t\geq 0$, hence
\begin{equation}\label{eq.r2}
\|D'x\|_\infty'\leq \delta\,C\,  \|x\|_{T_Z'}, \quad x\in \mathcal D(T_Z').
\end{equation}

We define now the linear operators
$$U_Z:  \mathcal D(U_Z) \to Y_1, \quad U_Zx=y,$$
where  $\mathcal D(U_Z)$ is the set of all functions  $x\in Y_\infty^Z$ such that there exists $y\in Y_1$ satisfying
$$x(t)=U(t,s)x(s)+\int_s^t U(t, \tau)y(\tau)\, d\tau, \quad \text{for $t\ge s\ge 0$,}$$
and respectively,
$$U_Z':  \mathcal D(U_Z') \to Y_\infty', \quad U_Z'x=y,$$
where  $\mathcal D(U_Z')$ is the set of all functions  $x\in Y_\infty^Z$ such that there exists $y\in Y_\infty'$ satisfying
$$x(t)=U(t,s)x(s)+\int_s^t\rho'(\tau) U(t, \tau)y(\tau)\, d\tau, \quad \text{for $t\ge s\ge 0$.}$$

\begin{lemma}\label{lem.robust}
We have:
\begin{equation}\label{eq.r.3}
\mathcal D(T_Z)=\mathcal D(U_Z) \text{ and } T_Z=U_Z+D,
\end{equation}
and respectively,
\begin{equation}\label{eq.r.4}
\;\mathcal D(T_Z')=\mathcal D(U_Z') \text{ and } T_Z'=U_Z'+D'.
\end{equation}
\end{lemma}
\begin{proof}[Proof of the lemma]
Take $x\in \mathcal D(U_Z)$, that is $x\in Y_\infty^Z$ such that there exists $y\in Y_1$ with $U_Zx=y$. Then, for $t\geq s\geq 0$ we have
\begin{align*}
x(t)&=U(t,s)x(s)+\int_s^t U(t, \tau)y(\tau)\, d\tau\\
&= T(t,s)x(s)+\int_s^t T(t, \tau)B(\tau)U(\tau,s)x(s)\, d\tau+\int_s^t T(t, \tau)y(\tau)\, d\tau\\
&\quad+\int_s^t\int_\tau^t T(t,r)B(r)U(r,\tau)y(\tau)\,dr\,d\tau\\
&= T(t,s)x(s)+\int_s^t T(t,r)y(r)\, dr+\int_s^t T(t, r)B(r)U(r,s)x(s)\, dr\\
&\quad+\int_s^t\int_s^r T(t,r)B(r)U(r,\tau)y(\tau)\,d\tau\,dr\\
&= T(t,s)x(s)+\int_s^t T(t,r)\left(y(r)+B(r)x(r)\right)\,dr,
\end{align*}
thus $x\in \mathcal D(T_Z)$ and $$T_Zx=y+Dx=(U_Z+D)x.$$
Reversing the arguments, we conclude that  \eqref{eq.r.3} holds. Similarly, one can prove \eqref{eq.r.4}.
\end{proof}

Now, we continue the proof of the theorem.
From \eqref{eq.r.3} and \eqref{eq.r1} we have
$$ \| (U_Z-T_Z)x\|_1=\|Dx\|_1\leq  \frac{\delta C}{a}\, \|x\|_{T_Z},\text{ for all } x\in \mathcal D(T_Z)=\mathcal D(U_Z),$$
which implies that $U_Z:  \mathcal D(U_Z) \to Y_1$ is bounded. Since $T_Z$ is invertible, we obtain that $U_Z$ is also invertible for sufficiently small $\delta>0$.
Similarly, one can show that $U_Z'$ is  invertible for sufficiently small $\delta>0$. By Theorem \ref{th.ad.dich} we conclude that the perturbed evolution family
$\{U(t,s)\}_{t\ge s \ge 0}$  admits a $\rho$-dichotomy with respect to the family of norms $\|\cdot\|_t$, $t\geq 0$.
\end{proof}

From Proposition \ref{prop.equiv} and Theorem \ref{rob.1} we are able now to establish the robustness property of $\rho$-nonuniform exponential dichotomy.
\begin{corollary}\label{cor.robust}
Assume that  $\mathcal{T}=\{T(t,s)\}_{t\ge s \ge 0}$ admits a $\rho$-nonuniform exponential dichotomy. If
$B:[0,\infty)\to \mathcal{B}(X)$ is a strongly continuous operator-valued function satisfying \eqref{eq.rob}
for some $a>0$ and sufficiently small $\delta>0$, then the perturbed evolution family satisfying \eqref{eq.perturbed}
admits also a $\rho$-nonuniform exponential dichotomy.
\end{corollary}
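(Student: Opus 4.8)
The plan is to reduce the statement to Theorem \ref{rob.1} by passing through the equivalence established in Proposition \ref{prop.equiv}. Since $\mathcal{T}$ admits a $\rho$-nonuniform exponential dichotomy, by the forward implication in Proposition \ref{prop.equiv} there is a family of norms $\{\lVert \cdot \rVert_t\}_{t\ge 0}$, with $t\mapsto \lVert x\rVert_t$ continuous for each $x\in X$, with respect to which $\mathcal{T}$ admits a $\rho$-dichotomy. Inspecting that construction, the norms are
\[
\|x\|_t=\sup_{\tau \ge t} e^{\lambda(\rho(\tau)-\rho(t))}\|T(\tau,t)P(t)x\| + \sup_{\tau\in[0,t]} e^{\lambda(\rho(t)-\rho(\tau))}\|T(\tau,t)(\Id-P(t))x\|,
\]
and they satisfy \eqref{427}, i.e. $\lVert x\rVert \le \lVert x\rVert_t \le C e^{\epsilon\rho(t)}\lVert x\rVert$ with $C=2D$ and with the same $\epsilon$ appearing in \eqref{dich.s}--\eqref{dich.u} for the given $\rho$-nonuniform exponential dichotomy.

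Next I would observe that the perturbation bound \eqref{eq.rob} is exactly the hypothesis required in Theorem \ref{rob.1}: $B$ is strongly continuous and $\|B(t)\| \le \delta e^{-(\epsilon+a)\rho(t)}\rho'(t)$ for some $a>0$ and sufficiently small $\delta>0$, with the \emph{same} $\epsilon$ as in \eqref{427} (this is why it was important that the construction in Proposition \ref{prop.equiv} produces norms with the same $\epsilon$). Applying Theorem \ref{rob.1} to the evolution family $\mathcal{T}$, the family of norms $\{\lVert \cdot \rVert_t\}_{t\ge 0}$, and the perturbation $B$, we conclude that the perturbed evolution family $\{U(t,s)\}_{t\ge s\ge 0}$ defined by \eqref{eq.perturbed} admits a $\rho$-dichotomy with respect to the same family of norms $\lVert \cdot \rVert_t$, $t\ge 0$.

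Finally, I would invoke the converse implication in Proposition \ref{prop.equiv}: since $\{U(t,s)\}_{t\ge s\ge 0}$ admits a $\rho$-dichotomy with respect to a family of norms $\lVert \cdot \rVert_t$ for which $t\mapsto \lVert x\rVert_t$ is continuous (and which satisfies \eqref{427}), it follows that $\{U(t,s)\}_{t\ge s\ge 0}$ admits a $\rho$-nonuniform exponential dichotomy, which is the desired conclusion. The argument is essentially a bookkeeping chain; the only point requiring care is that the three ingredients must be aligned on the parameter $\epsilon$ — the norms produced by Proposition \ref{prop.equiv} must satisfy \eqref{427} with the same $\epsilon$ that controls $\|B(t)\|$ in \eqref{eq.rob} — and this is automatic from the explicit form of the auxiliary norms. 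No genuinely new estimate is needed beyond those already in the excerpt.
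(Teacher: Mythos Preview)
Your proposal is correct and follows exactly the route the paper intends: the corollary is stated without a separate proof precisely because it follows by combining Proposition~\ref{prop.equiv} (both directions) with Theorem~\ref{rob.1}, just as you outline. Your additional remark that the auxiliary norms produced in Proposition~\ref{prop.equiv} satisfy~\eqref{427} with the \emph{same} $\epsilon$ as in~\eqref{dich.s}--\eqref{dich.u}, so that the hypothesis~\eqref{eq.rob} matches up, is the only point requiring verification, and you handle it correctly.
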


\begin{remark}
We stress that the robustness of  $\rho$-nonuniform exponential dichotomies was established in~\cite[Theorem 1]{Ba.Va.3} using different techniques. However, we point out  that we establish robustness under a wider class of perturbations than those considered in~\cite[Theorem 1]{Ba.Va.3}. On the other hand,  we consider a smaller class of rate functions $\rho$.
\end{remark}

\section{Acknowledgements}
D. D. was supported in part
by Croatian Science Foundation under the project IP-2019-04-1239 and by
the University of Rijeka under the projects uniri-prirod-18-9 and uniri-prprirod-19-16. 
N. L. was supported  by the research grant GNaC2018-ARUT, no. 1360/01.02.2019.


\begin{thebibliography}{99}
\bibitem{Ba.Va.1}
{L. Barreira, C. Valls}, {Growth rates and nonuniform hyperbolicity}, Discrete Cont. Dyn. Syst. 22 (2008), 509--528.

\bibitem{Ba.Va.4}
{L. Barreira, C. Valls}, {Polynomial growth rates}, Nonlinear Anal. 71 (2009), 5208--5219.

\bibitem{Ba.Va.3}
{L. Barreira, C. Valls}, {Robustness of noninvertible dichotomies}, J. Math. Soc. Japan 67 (2015), 293--317.

\bibitem{Ba.Dr.Va.1}
{L. Barreira, D. Dragi\v{c}evi\'{c}, C. Valls}, {Strong and weak $(L^p,L^q)$-admissibility}, Bull. Sci. Math.  138 (2014), 721--741.

\bibitem{Ba.Dr.Va.2}
{L. Barreira, D. Dragi\v{c}evi\'{c}, C. Valls}, {Admissibility on the half line for evolution families}, J. Anal. Math. 132 (2017), 157--176.

\bibitem{book}
{L. Barreira, D. Dragi\v{c}evi\'{c}, C. Valls}, {Admissibility and hyperbolicity}, SpringerBriefs in Mathematics, Springer, 2018.

\bibitem{Be.Si}
{A. J. G. Bento,  C. Silva}, {Stable manifolds for nonuniform polynomial dichotomies}, J. Funct. Anal.  257 (2009), 122--148.

\bibitem{BS1}
{A. J. G. Bento, C. M. Silva}, {Nonuniform  $(\mu,\nu)$-dichotomies and local dynamics of difference equations}, Nonlinear Anal. 75 (2012), 78--90.

\bibitem{BS2}
{A. J. G. Bento, C. M. Silva}, {Generalized nonuniform dichotomies and local stable manifolds}, J. Dyn. Diff. Equat. 25 (2013), 1139--1158.

\bibitem{Co-78}
{W. A. Coppel}, {Dichotomies in Stability Theory},  Lecture Notes in Math. 629, Springer-Verlag, 1978.

\bibitem{DaK-1974}
{ J. L. Daleck\u \i i, M. G. Kre\u \i n},  {Stability of Differential Equations in Banach Space},  Amer. Math. Soc., Providence, RI, 1974.

\bibitem{Da.1}
{D. Dragi\v{c}evi\'{c}}, {Admissibility and polynomial dichotomies for evolution families}, Commun. Pure Appl. Anal. 19 (2020), 1321--1336.

\bibitem{d}
{D. Dragi\v{c}evi\'{c}}, {Admissibility and nonuniform polynomial dichotomies}, Math. Nachr. 293 (2020), 226--243.

\bibitem{H}
{P. V. Hai}, {On the polynomial stability of evolution families}, Appl. Anal. 95 (2016), 1239--1255.

\bibitem{Huy}
{Nguyen Thieu Huy}, {Exponential dichotomy of evolution equations and admissibility of function spaces on a half-line}, J. Funct. Anal. 235 (2006), 330--354.

\bibitem{LRS}
{Y. Latushkin, T. Randolph, R. Schnaubelt}, {Exponential dichotomy and mild solution of nonautonomous equations in Banach spaces},  J. Dynam. Differential Equations 10 (1998), 489--510.

\bibitem{Lu.Po}
{N. Lupa, L. H. Popescu}, {Admissible Banach function spaces for linear dynamics with nonuniform behavior on the half-line}, Semigroup Forum 98 (2019), 184--208.

\bibitem{MaS-MA-1960-I}
{J. L. Massera, J. J. Sch\" affer},  {Linear differential equations and functional analysis I}, Ann. of Math.  67 (1958), 517--573.

\bibitem{MaS-MA-1960-IV}
{J. L. Massera, J. J. Sch\" affer}, {Linear differential equations and functional analysis IV}, Math. Ann. 139 (1960), 287--342.

\bibitem{MS}
{J. L. Massera, J. J. Sch\"affer}, {Linear Differential Equations and Function Spaces}, Pure and Applied Mathematics  21, Academic Press, 1966.

\bibitem{Me.Sa.Sa}
{M. Megan, A. L. Sasu,  B. Sasu}, {On nonuniform exponential dichotomy of evolution operators in Banach spaces}, Integral Equations Operator Theory 44 (2002), 71--78.

\bibitem{Mi.Ra.Sch}
{N. Van Minh, F. R\"{a}biger,  R. Schnaubelt}, {Exponential stability, exponential expansiveness, and exponential dichotomy of evolution equations on the half-line}, Integral Equations Operator Theory 32 (1998), 332--353.

\bibitem{MH}
{N. Van Minh,  N. T. Huy}, {Characterizations of dichotomies of evolution equations on the half-line}, J. Math. Anal. Appl. 261 (2001), 28--44.
\bibitem{Mul}
{J. S. Muldowney}, {Dichotomies and asymptotic behaviour for linear differential systems}, Trans. Amer. Math. Soc. 283 (1984), 465--484.

\bibitem{NP}
{R. Naulin, M. Pinto}, {Roughness of  $(h,k)$-dichotomies}, J. Differential Equations 118 (1995), 20--35.
 
\bibitem{Pe}
{O. Perron}, {Die Stabilit\"atsfrage bei Differentialgleichungen}, Math. Z. 32 (1930), 703--728.

\bibitem{PM}
{P. Preda, M. Megan}, {Nonuniform dichotomy of evolutionary processes in Banach spaces}, Bull. Austral. Math. Soc.  27 (1983), 31--52.

\bibitem{Preda}
{P. Preda, A. Pogan,  C. Preda}, {$(L^p,L^q)$-admissibility and exponential dichotomy of evolutionary processes on the half-line}, Integral Equations Operator Theory 49 (2004), 405--418.

\bibitem{Preda2}
{P. Preda, A. Pogan,  C. Preda}, {Sch\"affer spaces and exponential dichotomy for evolutionary processes}, J. Differential Equations 230 (2006), 378--391.

\bibitem{Sasu1}
{A. L. Sasu, B. Sasu}, {Exponential dichotomy on the real line and admissibility of function spaces}, Integral Equations Operator Theory 54 (2006), 113--130.

\bibitem{Sasu3}
{A. L. Sasu,  B. Sasu}, {Exponential trichotomy and p-admissibility for evolution families on the real line}, Math. Z. 253 (2006), 515--536.
 
\bibitem{Sasu2}
{A. L. Sasu,  B. Sasu}, {Integral equations, dichotomy of evolution families on the half-line and applications}, Integral Equations Operator Theory 66 (2010), 113--140.

\bibitem{Sasu4}
{A. L. Sasu, B. Sasu}, {Integral equations in the study of the asymptotic behavior of skew-product flows}, Asymptot. Anal. 68 (2010), 135--153.
\bibitem{Sasu5}
{A. L. Sasu, B. Sasu}, {On the asymptotic behavior of autonomous systems}, Asymptot. Anal. 83 (2013), 303--329.
 
\bibitem{Sa.Sa.1}
{A. L. Sasu, M. G. Babu\c tia,  B. Sasu},  {Admissibility and nouniform exponential dichotomy on the half line}, Bull. Sci. Math.  137 (2013), 466--484.

\bibitem{Zh.Lu.Zhang}
{L. Zhou, K. Lu, W. Zhang}, {Equivalences between nonuniform exponential dichotomy and admissibility}, J. Differential Equations 262 (2017), 682--747.
\end{thebibliography}
\end{document}